\begin{document}

\newtheorem{tm}{Theorem}[section]
\newtheorem{prop}[tm]{Proposition}
\newtheorem{defin}[tm]{Definition}
\newtheorem{coro}[tm]{Corollary}
\newtheorem{lem}[tm]{Lemma}
\newtheorem{assumption}[tm]{Assumption}
\newtheorem{rk}[tm]{Remark}
\newtheorem{nota}[tm]{Notation}
\numberwithin{equation}{section}
\numberwithin{tm}{section}


\newcommand{\eqd}{\sim}
\def\p{\partial}
\def\R{{\mathbb R}}
\def\N{{\mathbb N}}
\def\Q{{\mathbb Q}}
\def\C{{\mathbb C}}
\def\l{{\langle}}
\def\r{\rangle}
\def\t{\tau}
\def\k{\kappa}
\def\a{\alpha}
\def\la{\lambda}
\def\De{\Delta}
\def\de{\delta}
\def\ga{\gamma}
\def\Ga{\Gamma}
\def\ep{\varepsilon}
\def\si{\sigma}
\def\Re {{\rm Re}\,}
\def\Im {{\rm Im}\,}
\def\E{{\mathbb E}}
\def\P{{\mathbb P}}
\def\Z{{\mathbb Z}}
\def\D{{\mathbb D}}
\newcommand{\ceil}[1]{\lceil{#1}\rceil}

\title{Dynamics of a parabolic-ODE competition system in heterogeneous environments}


\author{ Yuan Lou\footnote{lou@math.ohio-state.edu; Research  partially supported by NSF grant DMS-1853561}
\qquad Rachidi B. Salako\footnote{salako.7@osu.edu}
\\
\\
{\small Department of  Mathematics, Ohio State University,}\\
{\small Columbus, OH 43210, USA}
%
}

\date{} 
\maketitle

\begin{abstract} This work is concerned with the large time 
behavior of the solutions of a parabolic-ODE hybrid system,
modeling the competition of two populations which
are identical except their movement behaviors:
one species moves by random dispersal
while the other does not diffuse. We show
that the non-diffusing population will always
drive the diffusing one to extinction in 
environments with sinks.
In contract, the non-diffusing
and diffusing populations can coexist in
environments without sinks.

\end{abstract}


\noindent{\bf Keywords}: Competition system; reaction-diffusion;
asymptotic behavior
\smallskip

{
\noindent{\bf 2010 Mathematics Subject Classification}: 92D25, 35B40, 35K57}

\section{Introduction} 

In this work we investigate the asymptotic 
behaviors of classical solutions of the following 
parabolic-ODE competition system:
\begin{equation}\label{Main-eq1}
\begin{cases}
u_t=d\Delta u+ u(a(x)-u- v) & x\in\Omega, \ t>0,\cr
v_t =v(a(x)-u-v) & x\in\Omega, \ t>0,\cr
\frac{\partial u}{\partial n}=0 & x\in\partial\Omega, \ t>0,\cr
u(x,0)=u_0(x) \quad \textrm{and} \quad v(x,0)=v_0(x) & x\in\Omega,
\end{cases}
\end{equation}
where $d>0$ is a constant, $a(\cdot)\in C^{\alpha}(\overline{\Omega})$
for some $\alpha\in (0,1)$, and $\Omega$ is a bounded domain in $\mathbb{R}^N$ with smooth boundary $\partial\Omega$, $N\geq 1$.  
The functions $u(x,t)$ and $v(x,t)$ denote the density functions of two 
populations residing in the same habitat $\Omega$ and competing for 
a common limited resource. Hence, the initial conditions $u_0(x)$ and $v_0(x)$ 
are assumed to be non-negative, not identically zero and
continuous functions 
on $\overline{\Omega}$.  The function $a(x)$ represents their common intrinsic growth rate, and throughout this paper it is assumed to be 
non-constant and positive somewhere in $\Omega$, reflecting 
that the environment is heterogeneous in space. 
The region $\{x\in \bar\Omega: a(x)<0\}$ is referred to as 
the sink (low quality habitat),  where the growth rate 
of the population is negative. $n$ denotes the 
outward unit normal vector on $\partial\Omega$, and
the boundary condition for $u$ means that no individuals
cross the boundary.

We note that system \eqref{Main-eq1} is a  
Lotka-Volterra competition model, 
in which the species $v(x,t)$ has zero diffusion rate, while  $u(x,t)$ 
has a 
positive diffusion rate. In the recent years there has been  
increasing interest in the dynamics of
two-species Lotka–Volterra competition models in heterogeneous
environments; see \cite{CC2003, CC2018, Cosner2014,
HeNi3, HeNi4, Hutson2001, LamNi2012, lou2006, Lou2008, Ni2011}
 and the references therein. 
To motivate our work, 
consider the following fully-parabolic Lotka-Volterra competition system:
\begin{equation}\label{Main-eq1-epsilon-positive}
\begin{cases}
u_t=d\Delta u+ u(a(x)-u- v) & x\in\Omega, \ t>0,\cr
v_t =\varepsilon\Delta v +  v(a(x)-u-v) & x\in\Omega, \ t>0,\cr
\frac{\partial u}{\partial n}=0 & x\in\partial\Omega, \ t>0,\cr
u(x,0)=u_0(x) \quad \textrm{and} \quad v(x,0)=v_0(x) & x\in\Omega,
\end{cases}
\end{equation}
where $\varepsilon, d>0$ are constant positive numbers. 
When $\varepsilon=0$, system \eqref{Main-eq1-epsilon-positive} reduces to system \eqref{Main-eq1}. 
Hence the system studied in this work can be regarded as the limiting case of \eqref{Main-eq1-epsilon-positive} by formally letting $\varepsilon\to0$. 
Concerning the large time behavior of
the solutions of \eqref{Main-eq1-epsilon-positive}, it is well known that the relationship between $d$ and $\varepsilon$ plays an important role. For our interest, we suppose that $0<\varepsilon<d$. In \cite{DHMP1998}, Dockery et. al. showed that the species with the smaller diffusion rate is always favored by the competition, provided that $a(x)$ is non-constant. More precisely, if we assume that $a(x)$ is non-constant and that in the absence of competition, there exist two semi-trivial steady states of \eqref{Main-eq1-epsilon-positive}, denoted as $(u^*_{d}(x),0)$ and $(0,v_{\varepsilon}^*(x))$,  with $0<\min\{u^*_{d}(x),v_{\varepsilon}^*(x)\}<\max\{u^*_{d}(x),v_{\varepsilon}^*(x)\}<\infty$, then 
$(0,v^*_{\varepsilon}(x))$ is globally asymptotically stable. 
This conclusion is often referred to as the evolution of slow dispersal
\cite{Hastings1983}.
It is thus natural to inquire  about the dynamics of 
\eqref{Main-eq1-epsilon-positive} when $\varepsilon=0$,
in particular, whether the non-diffusing population 
is still able to drive the diffusing one to extinction,
as in the case of $\epsilon\in (0, d)$. 

\noindent To state our main result on system \eqref{Main-eq1}, we first introduce
a few notations. Let $C(\overline{\Omega})$ denote the Banach space of uniformly continuous functions on $\Omega$ endowed with the usual sup-norm, 
and $[C(\overline{\Omega})]^+$ denotes the closed subspace of $C(\overline{\Omega})$ consisting of non-negative functions. 
\begin{defin}\label{def1}
For given $u_0,v_0\in C(\overline{\Omega})$ with $u_0(x)\ge 0$ and $v_0(x)\geq 0$ and $T\in(0,\infty]$, we say that $ (u(x,t;u_0,v_0),v(x,t;u_0,v_0))$ is a classical solution of \eqref{Main-eq1} on $[0,T)\times\Omega$ with $ (u(x,0;u_0,v_0),v(x,0;u_0,v_0))=(u_0(x),v_0(x)) $
if the followings hold:
\begin{itemize}
\item[\rm 1)]for every $p>N$, \begin{equation}\label{eq:Regulaity-1-intro}
    u\in  C([0,T):[C(\overline{\Omega})]^+)\cap C^1((0,T):C(\overline{\Omega}))\cap C((0,T):W^{2,p}(\Omega)),
\end{equation}
\item[\rm 2)] \begin{equation}\label{eq:Regulaity-2-intro}
    v\in  C([0,\infty):[C(\overline{\Omega})]^+)\cap C^1((0,\infty):C(\overline{\Omega})), 
    \end{equation}
\item[\rm 3)] \begin{equation} \lim_{t\to0}\left[\|u(\cdot,t;u_0,v_0)-u_0\|_{\infty}+\|v(\cdot,t;u_0,v_0)-v_0\|_{\infty}\right]=0,
\end{equation}
\item[\rm 4)] $(v(x,t;u_0,v_0))$ satisfies the second equation of \eqref{Main-eq1} in classical sense,
\item[\rm 5)]  $u(x,t;u_,v_0)$ is a solution of the first equation of \eqref{Main-eq1} as an abstract evolution equation in $C(\overline{\Omega})$.
\end{itemize}
By a global classical solution of \eqref{Main-eq1}, we mean a classical solution  on $[0,\infty)\times\Omega$.
\end{defin}
\noindent We first note from the continuous embedding results of $W^{2,p}(\Omega)$ in $C^{1+\beta}(\Omega)$ for $0<\beta\ll 1$ and $p>N$, that for any solution $(u(x,t;u_0,v_0),v(x,t;u_0,v_0))$ of \eqref{Main-eq1} in the sense of Definition \ref{def1} that both $\partial_tu(x,t;u_0,v_0)$ and $\partial_xu(x,t;u_0,v_0)$ exist in classical sense and are continuous.  When $\|u_0\|_{\infty}>0$, it follows from the comparison principle for parabolic equations that $ u(x,t;u_0,v_0)>0 $ for all $t>0$ and $x\in\Omega$ and that $ v(x,t;u_0,v_0)>0 $ for every $t>0$ and $x\not\in\{y\in\Omega: \ v_0(y)=0\}$. 
The following result addresses the existence of global classical solution of \eqref{Main-eq1}:

\begin{prop}\label{Tm-01} Given any $u_0,v_0\in[C(\overline{\Omega})]^+$, system \eqref{Main-eq1} has a unique global classical solution on $[0,\infty)\times\Omega$. Moreover, it holds that 
\begin{equation}\label{Tm-01-eq1}
    \limsup_{t\to\infty}\max\{\|u(\cdot,t;u_0,v_0)\|_{\infty},\|v(\cdot,t;u_0,v_0)\|_{\infty}\}\leq \|a\|_{\infty}.
\end{equation}
\end{prop}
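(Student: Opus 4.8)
The plan is to construct the solution by a fixed-point / iteration argument that exploits the fact that the $v$-equation is merely an ODE for each fixed $x$. First I would set up the problem as follows: given a nonnegative $u\in C([0,T];C(\overline\Omega))$, solve the logistic-type ODE $v_t=v(a(x)-u(x,t)-v)$, $v(x,0)=v_0(x)$, pointwise in $x$. Since the right-hand side is smooth in $v$ and the coefficients are continuous in $t$ (uniformly in $x$), this ODE has a unique solution, it stays nonnegative, and standard a priori bounds (comparison with the scalar ODE $\zeta_t=\zeta(\|a\|_\infty-\zeta)$) give $0\le v(x,t)\le\max\{\|v_0\|_\infty,\|a\|_\infty\}$ globally in time, with $v$ depending continuously on $u$ in the sup-norm on compact time intervals. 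I would then feed this $v$ into the parabolic equation $u_t=d\Delta u+u(a(x)-u-v)$ with Neumann data, treated as an abstract semilinear evolution equation in $C(\overline\Omega)$ governed by the analytic semigroup generated by $d\Delta$ with Neumann boundary conditions; analytic semigroup theory (à la Henry/Pazy) gives a unique mild solution with the regularity in \eqref{eq:Regulaity-1-intro}, since the nonlinearity $u\mapsto u(a-u-v)$ is locally Lipschitz on $C(\overline\Omega)$. Composing these two solution maps defines an operator whose fixed point is a local classical solution; a contraction-mapping argument on a small time interval $[0,\tau]$, using the Lipschitz dependence of the ODE solution on $u$ and smoothing/continuity estimates for the semigroup, yields existence and uniqueness locally.

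Next I would upgrade local existence to global existence. The key point is an a priori $L^\infty$ bound independent of the existence time. For $u$: since $v\ge 0$, we have $u_t\le d\Delta u+u(a(x)-u)\le d\Delta u+u(\|a\|_\infty-u)$, so by the comparison principle $u(x,t)\le \overline u(t)$ where $\overline u$ solves $\overline u'=\overline u(\|a\|_\infty-\overline u)$, $\overline u(0)=\|u_0\|_\infty$; hence $u$ stays bounded by $\max\{\|u_0\|_\infty,\|a\|_\infty\}$ on any existence interval. The same scalar ODE comparison bounds $v$. Because both components remain in a fixed bounded set of $C(\overline\Omega)$, the local solution cannot blow up in finite time (the semigroup-theoretic blow-up alternative forces $\|u(\cdot,t)\|_\infty\to\infty$ as $t\uparrow T_{\max}$ if $T_{\max}<\infty$), so $T_{\max}=\infty$ and the solution is global. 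Nonnegativity of $u$ for all time follows from the parabolic comparison principle (the zero function is a subsolution of the $u$-equation with the given coefficients), and nonnegativity of $v$ is immediate from the ODE.

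Finally, for the asymptotic bound \eqref{Tm-01-eq1}: the comparison functions $\overline u(t)$ and the corresponding $\overline v(t)$ both satisfy $\limsup_{t\to\infty}\overline u(t)=\limsup_{t\to\infty}\overline v(t)=\|a\|_\infty$ whenever the initial datum is positive somewhere (and the limit is $\le\|a\|_\infty$ in general), since every positive solution of $\zeta'=\zeta(\|a\|_\infty-\zeta)$ converges to $\|a\|_\infty$. Thus $\limsup_{t\to\infty}\|u(\cdot,t)\|_\infty\le\|a\|_\infty$ and likewise for $v$, which gives \eqref{Tm-01-eq1}. I expect the main technical obstacle to be the careful verification of the regularity claims in Definition \ref{def1}, particularly showing that the mild solution $u$ obtained from the semigroup is genuinely a classical solution with $u\in C((0,T);W^{2,p}(\Omega))$ and $C^1$ in time, uniformly handling the coupling through the only-continuous (not differentiable in $x$) function $v$; this requires the standard but somewhat delicate parabolic smoothing estimates for analytic semigroups together with a bootstrap using the Hölder continuity in time of $v$ inherited from the ODE. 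Everything else — the contraction estimate, the comparison arguments, and the $\limsup$ computation — is routine once this regularity framework is in place.
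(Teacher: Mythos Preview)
Your proposal is correct and follows essentially the same route as the paper: both solve the $v$-ODE explicitly as a functional of $u$, plug this into the Duhamel formula for the $u$-equation, run a contraction argument on $C([0,T];C(\overline\Omega))$ for small $T$, invoke the blow-up alternative together with the comparison bound $u\le\max\{\|u_0\|_\infty,\|a\|_\infty\}$ to get global existence, and obtain \eqref{Tm-01-eq1} by the same scalar comparison. The paper carries out exactly the regularity bootstrap you flag as the main obstacle, proving local H\"older continuity of $t\mapsto u(\cdot,t)$ via fractional-power estimates for the shifted semigroup $e^{t(d\Delta-\lambda)}$ and then upgrading the mild solution to a classical one.
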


\noindent 
When $a(x)$ changes sign in $\Omega$, the following result provides
a rather complete feature of the behavior of solutions for large time:

\begin{tm}\label{Main-tm-1} Suppose that 
\begin{equation}\label{H2}
\{x\in\overline{\Omega} \ : \ a(x)\le 0\}\neq \emptyset.
\end{equation} 
For every  non-negative initial $u_0,v_0\in C(\overline{\Omega})$ satisfying
\begin{equation}\label{eq:initial-cond1}
   \{x\in\overline{\Omega}\ :\ v_0(x)=0\}\subset \{x\in\overline{\Omega} \ : \ a(x)\le 0\},
\end{equation}
we have that 
\begin{equation}\label{asymptotic-eq}
\lim_{t\to\infty}(u(x,t;u_0,v_0),v(x,t;u_0,v_0))=(0,a_{+}(x)),\quad \forall x\in\Omega.
\end{equation}
Moreover the convergence $u(x,t;u_0,v_0)\to 0 $ as $t\to\infty$ is uniform in $x\in\Omega$.
\end{tm}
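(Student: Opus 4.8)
The plan is to run a monotone iteration that simultaneously drives the long‑time upper bound for $u$ down to $0$ and the long‑time lower bound for $v$ up to $a_+$, using the sink hypothesis \eqref{H2} only at the very end. \emph{Step 1 (the sink and easy bounds).} Since $u\ge 0$, on $\{x\in\overline\Omega:a(x)\le 0\}$ the second equation gives $v_t=v(a-u-v)\le -v^2$, hence $0\le v(x,t)\le 1/t$ there and $v(\cdot,t)\to 0$ uniformly on that set; likewise, on $\{a>0\}$ we have $v_t\le v(a(x)-v)$, so $\limsup_{t\to\infty}v(x,t)\le a_+(x)$ for every $x\in\Omega$. It therefore remains to prove that $u(\cdot,t)\to 0$ uniformly and that $\liminf_{t\to\infty}v(x,t)\ge a_+(x)$ pointwise.

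\emph{Step 2 (the coupled iteration).} Because $v\ge 0$, $u$ is a subsolution of the scalar logistic equation $w_t=d\Delta w+w(a-w)$ (Neumann, same initial datum), so by the comparison principle and the classical global asymptotic stability of its positive steady state $U_0$ when it exists (otherwise all its solutions tend to $0$ and $u\to 0$, finishing the proof) we obtain $\limsup_{t\to\infty}u(\cdot,t)\le U_0$ uniformly. Next, on $\{a>U_0\}\subseteq\{a>0\}$ we have $v_0>0$ by \eqref{eq:initial-cond1}, so $v(\cdot,t)>0$ there for all $t>0$; once $u\le U_0+\varepsilon$ the $v$-equation dominates the scalar ODE $y'=y\big(a(x)-U_0(x)-\varepsilon-y\big)$, and letting $\varepsilon\downarrow 0$ gives $\liminf_{t\to\infty}v(x,t)\ge V_0(x):=(a-U_0)_+(x)$. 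Now iterate: let $U_{n+1}$ be the positive steady state of $w_t=d\Delta w+w(a-V_n-w)$ if it exists and $U_{n+1}:=0$ otherwise, and set $V_{n+1}:=(a-U_{n+1})_+$. The same two comparisons, used inductively, give $\limsup_{t\to\infty}u(\cdot,t)\le U_n$ uniformly and $\liminf_{t\to\infty}v(\cdot,t)\ge V_n$ for all $n$. Since the positive logistic steady state is monotone in its growth rate, $U_n$ is non-increasing and $V_n$ non-decreasing; uniform $W^{2,p}$ (hence $C^{1,\a}$) estimates then give $U_n\downarrow U_\infty$ and $V_n\uparrow V_\infty$ in $C(\overline\Omega)$ with $V_\infty=(a-U_\infty)_+$ and $d\Delta U_\infty+U_\infty(a-V_\infty-U_\infty)=0$.

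\emph{Step 3 (the sink forces $U_\infty\equiv 0$).} Substituting $V_\infty=(a-U_\infty)_+$ into the limit equation and simplifying the reaction term yields $d\Delta U_\infty=U_\infty\,(U_\infty-a)_+\ge 0$, so $U_\infty$ is subharmonic; integrating over $\Omega$ and using $\partial U_\infty/\partial n=0$ forces $\Delta U_\infty\equiv 0$, hence $U_\infty\equiv c$ for a constant $c\ge 0$. Then $c\,(c-a(x))_+=0$ for all $x$, so either $c=0$, or $a(x)\ge c>0$ on all of $\overline\Omega$, which contradicts \eqref{H2}. Thus $U_\infty\equiv 0$, whence $\limsup_{t\to\infty}\|u(\cdot,t)\|_\infty\le\|U_n\|_\infty\to 0$, i.e. $u(\cdot,t)\to 0$ uniformly; and $V_\infty=a_+$, so $\liminf_{t\to\infty}v(x,t)\ge a_+(x)$, which together with Step 1 gives $v(x,t)\to a_+(x)$ for every $x\in\Omega$.

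\emph{Main obstacle.} The technical heart is making the feedback in Step 2 rigorous: the lower bounds for $v$ have to be uniform enough in $x$ to be fed back into the parabolic comparison for the $u$-equation, even though $v_0$ is merely continuous (and may vanish along $\partial\{a>0\}$) and $v$ carries no spatial smoothing. This is handled by running the ODE comparison on compact sets $\{a-U_n\ge\de\}\Subset\{a>0\}$, where $v_0$ is bounded below and the convergence of the scalar logistic ODE is uniform, and then letting $\de\to 0$; one also needs continuity of the existence and of the value of the logistic steady state under $L^\infty$-perturbations of the growth rate, both to pass from the $\varepsilon$-relaxed problems back to $U_{n+1}$ and to pass to the limit in the elliptic equations defining $U_\infty$. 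By contrast the conceptual core is short — Step 3's observation that the limit profile must be subharmonic, hence constant, hence (precisely because there is a sink) identically zero.
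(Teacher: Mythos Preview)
Your proof is correct and follows essentially the same route as the paper: your iteration $U_{n+1}=$ positive logistic steady state with growth rate $a-(a-U_n)_+$ is exactly the paper's sequence $\{u^*_k\}$ of Theorem~\ref{Tm-appriori-result}, and your Step~3 --- the limit satisfies $d\Delta U_\infty=U_\infty(U_\infty-a)_+\ge 0$, hence is harmonic, hence constant, hence zero by the sink hypothesis --- is precisely the argument for Theorem~\ref{Tm-appriori-result}(ii). The only technical difference is in how the uniform-in-$x$ lower bound on $v$ is fed back into the $u$-equation at each step: the paper first manufactures a strict buffer $u\le(1-\varepsilon_0)u^*_k$ by perturbing $a$ locally near a point of $\{a>u^*_k\}$ (Lemma~\ref{Lem1}) and then uses the explicit ODE solution on $\{a\ge(1-\varepsilon)u^*_k\}$ (Theorem~\ref{T:1}), whereas you work on compact sets $\{a-U_n\ge\delta\}$ and invoke $L^\infty$-continuity of the logistic steady state in its growth rate; both devices serve the same purpose.
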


Biologically, Theorem \ref{Main-tm-1}
implies that the non-diffusing population will always
drive the diffusing one to extinction in 
environments with sink.
When (\ref{H2}) fails to hold, i.e. if 
$a(x)$ is strictly positive, Theorems \ref{Tm-02-0},
\ref{tm-2}, and \ref{tm-positive} provide
some partial answers on
the large time behaviors of solutions, 
which illustrate that  the non-diffusing
and diffusing populations can coexist in
environments without sinks.
We also note that Theorem \ref{Main-tm-1}
can not hold without the condition  (\ref{eq:initial-cond1}).

\section{Preliminaries}

In this section we recall a few results from the literature on the single species model that will be needed for our discussion and prove Theorem \ref{Tm-appriori-result} (see below). It turns out that Theorem \ref{Tm-appriori-result} will be 
essential for our proof of Theorem \ref{Main-tm-1}.

\noindent We first consider the single species  equation 
\begin{equation}\label{Main-eq2}
\begin{cases}
u_t=d \Delta u+ u(a(x)-u)\quad  & x\in\Omega, \ t>0,\cr
\frac{\partial u}{\partial n}=0 \quad & x\in\partial\Omega, \ t>0,\cr
u(x,0)=u_0(x)\quad  & x\in\Omega.
\end{cases}
\end{equation}
We denote by $u(x,t;u_0)$ the classical solution of \eqref{Main-eq2}. Next, we let $u^*(x)$ denotes the unique non-negative steady solution 
of \eqref{Main-eq2} attracting all positive solutions of \eqref{Main-eq2}. 
The existence of $u^*(x)$ is well known; see \cite{CC2003}. 
Throughout this section we shall suppose that $u^*(x)>0$. 
Hence, $u^*(x)$ is the only solution of the elliptic equation
\begin{equation}\label{eq:steady-state}
\begin{cases}
0=d \Delta u^* +u^*(a(x)-u^*)\quad & x\in\Omega,\cr
u^*(x)>0 \quad & x\in\Omega,\cr
\frac{\partial u^*}{\partial n}=0 \quad & x\in\partial\Omega.
\end{cases}
\end{equation}
Hence, for every non-negative and not identically zero initial function 
$u_0\in C(\overline{\Omega})$, it holds that
\begin{equation}\label{eq:single-species-stability}
\lim_{t\to\infty}\|u(t,\cdot;u_0)-u^*(\cdot)\|_{\infty}=0.
\end{equation} 
Note that a sufficient condition to ensure that $u^*(x)>0$ is to require that $\int_{\Omega}a(x)dx>0$. Note also that $u(x,t;u^*)=u^*(x)$ for every $x\in\Omega$, $t\geq 0$. It is important to note that $u^*(x)$ is not a constant function. Indeed, otherwise, by \eqref{eq:steady-state}, we would have that $a(x)=u^*(x)$ 
for all $x\in\Omega.$
 So, $a(x)$ must also be a constant function, which contradicts 
 our standing assumption on $a(x)$.  We note that the constant solution 
$u(x,t):=\|a\|_{\infty},$ is a super-solution of \eqref{Main-eq2}, hence by \eqref{eq:single-species-stability}, the fact that $u^*(x)$ is not a constant function, and the comparison principle for scalar parabolic equations, we must have that 
 \begin{equation*}
 u^*(x)=\inf_{t>0}u(t,x;\|a\|_{\infty})<\|a\|_{\infty},\quad x\in\Omega.
 \end{equation*}
 Thus, since $u(x):=\|a\|_{\infty}$ is a super-solution of \eqref{eq:steady-state}, it follows by Hopf  boundary lemma that 
 \begin{equation}\label{eq:00}
 \max_{x\in\Omega}u^*(x)<\|a\|_{\infty}.
 \end{equation}
 
  The following result holds.
\begin{lem}\label{lemma1}
There holds that 
\begin{equation}\label{eq:01}
\int_{\Omega}u^*(x)(a(x)-u^*(x))dx=0,
\end{equation}
and 
\begin{equation}\label{eq:02}
\Omega^*:=\{x\in\Omega\ :\ a(x)-u^*(x)>0  \}\neq \emptyset.
\end{equation}
\end{lem}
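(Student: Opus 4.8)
The plan is to derive \eqref{eq:01} by integrating the steady-state equation \eqref{eq:steady-state} over $\Omega$. Indeed, dividing the elliptic equation $0 = d\Delta u^* + u^*(a(x)-u^*)$ by $u^*(x)$ (which is legitimate since $u^*>0$ on $\overline{\Omega}$ by assumption and the Hopf lemma), we obtain $a(x)-u^*(x) = -d\,\Delta u^*/u^*$. Multiplying back by $u^*$ and integrating is circular, so instead I would integrate the original equation directly: $\int_\Omega u^*(a-u^*)\,dx = -d\int_\Omega \Delta u^*\,dx = -d\int_{\partial\Omega}\frac{\partial u^*}{\partial n}\,d\sigma = 0$ by the Neumann boundary condition and the divergence theorem. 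This gives \eqref{eq:01} immediately.

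For \eqref{eq:02}, the idea is to argue by contradiction. Suppose $\Omega^* = \emptyset$, i.e. $a(x)-u^*(x)\le 0$ for all $x\in\Omega$. Combined with \eqref{eq:01}, namely $\int_\Omega u^*(a-u^*)\,dx = 0$, and the fact that $u^*>0$, this forces $a(x)-u^*(x)=0$ a.e. in $\Omega$, hence everywhere by continuity; that is, $u^*(x)=a(x)$ for all $x\in\Omega$. Plugging $u^*=a$ into \eqref{eq:steady-state} yields $d\Delta a = 0$ together with $\partial a/\partial n = 0$ on $\partial\Omega$, so $a$ is harmonic with Neumann data zero, whence $a$ is constant — contradicting the standing hypothesis that $a(\cdot)$ is non-constant. (Alternatively, one can invoke directly that $u^*$ is non-constant, as already established in the text preceding the lemma: if $a\equiv u^*$ then $a$ is non-constant, but then $a = u^*$ must satisfy $\Delta u^* = 0$, again forcing constancy, a contradiction.) Therefore $\Omega^*\neq\emptyset$.

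I do not anticipate a serious obstacle here; both parts are short. The only point requiring a little care is the passage from ``$a-u^*\le 0$ and $\int u^*(a-u^*)=0$'' to ``$a\equiv u^*$'': this uses that the integrand $u^*(a-u^*)$ is continuous, non-positive, and has zero integral, hence vanishes identically, and then that $u^*$ is strictly positive so the factor $u^*$ can be divided out. After that, the rigidity step — a function that is harmonic (or merely satisfies $\Delta w = 0$) with homogeneous Neumann boundary conditions on a bounded smooth domain must be constant — is standard, and closes the argument against the non-constancy assumption on $a$.
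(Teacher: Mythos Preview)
Your argument for \eqref{eq:01} is exactly the paper's: integrate \eqref{eq:steady-state} over $\Omega$ and use the Neumann condition together with the divergence theorem.

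For \eqref{eq:02} you take a genuinely different route. The paper does not argue by contradiction; it simply invokes the strict inequality \eqref{eq:00}, $\max_{x\in\overline\Omega} u^*(x) < \|a\|_\infty$, which was derived in the paragraph preceding the lemma. At a point where $a$ attains its maximum one then has $a(x)-u^*(x)>0$, so $\Omega^*\neq\emptyset$ in one line. Your approach instead combines \eqref{eq:01} with the sign assumption $a-u^*\le 0$ to force $u^*\equiv a$, and then uses the equation plus Neumann rigidity to conclude that $a$ must be constant, a contradiction. Both are correct. The paper's version is shorter because it cashes in the preparatory work leading to \eqref{eq:00}; your version is self-contained (it does not need \eqref{eq:00}) and makes transparent that the obstruction is precisely the non-constancy of $a$.
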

\begin{proof}
Integrating the first equation in \eqref{eq:steady-state}, then use integration by part formula yields \eqref{eq:01}. Observe that \eqref{eq:02} easily follows from \eqref{eq:00}.
\end{proof}

Next, we consider the sequence $\{u^*_{k}\}_{k\geq 0}$ with $u^*_0(x)=u^*(x)$ defined as follows. Suppose that $u^*_{k}(x)$ is being defined, we let $u^*_{k+1}(x)$ denotes the unique non-negative attracting set of solutions of the PDE
\begin{equation}\label{d:3}
\begin{cases}
u_t=d \Delta u +u(a(x)-\left[a(x)-u^*_k(x)\right]_+-u),\quad & x\in\Omega,\ t>0,\cr
\frac{\partial u}{\partial n}=0\quad & x\in\partial\Omega,\ t>0,
\end{cases}
\end{equation}
where we adopt the conventional notation $[a(x)-u^*_k(x)]_{+}=\max\{0, a(x)-u^*_k(x)\}$.
Note that $u^*_{k+1}(x)$ is a non-negative steady state solution of \eqref{d:3} for every $k\geq0$. Let $u_{k+1}(t,x;w)$ denotes the solution of \eqref{d:3} with $u_{k+1}(0,x)=w(x)$. We prove the following result. 

\begin{tm}\label{Tm-appriori-result} Let $\{u^*_{k}\}_{k\geq 0}$ be defined as above.
\begin{itemize}
    \item[\rm(i)] $u^*_{k+1}(x)\leq u^*_{k}(x)$ for every $x\in\Omega$ and $k\geq 0$.
    \item[\rm(ii)] It holds that 
    \begin{equation}
        \lim_{k\to\infty}\|u^*_k-[a_{\min}]_{+}\|_{\infty}=0,
    \end{equation}
    where 
    $a_{min}:=\min_{x\in\overline{\Omega}}a(x)$
    and $[a_{\min}]_{+}=\max\{0,a_{min}\}$. 
    
\end{itemize}

\end{tm}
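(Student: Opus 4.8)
The plan is to establish (i) by an induction argument using the comparison principle, and then to use the monotonicity from (i) together with a characterization of the limiting elliptic problems to pin down the limit in (ii).

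For part (i), I would argue by induction on $k$. For the base case $k=0$, note that $u^*_1$ is the attracting steady state of \eqref{d:3} with $k=0$, whose reaction term is $u(a(x)-[a(x)-u^*(x)]_+-u)$. Since $a(x)-[a(x)-u^*(x)]_+ = \min\{a(x),u^*(x)\}\le a(x)$, the function $u^*=u^*_0$ is a supersolution of the elliptic problem defining $u^*_1$ (indeed $0 = d\Delta u^* + u^*(a-u^*) \ge d\Delta u^* + u^*(\min\{a,u^*\}-u^*)$), so by the standard sub/supersolution comparison for the logistic-type equation (and the fact that $u^*_1$ attracts all positive solutions), $u^*_1\le u^*_0$ on $\Omega$. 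For the inductive step, suppose $u^*_{k}\le u^*_{k-1}$. Then $[a-u^*_{k}]_+\ge [a-u^*_{k-1}]_+$ pointwise, hence the reaction term defining $u^*_{k+1}$, namely $u(a-[a-u^*_k]_+-u)$, is pointwise $\le$ the reaction term defining $u^*_{k}$. Therefore $u^*_{k}$ is a supersolution of the elliptic equation for $u^*_{k+1}$, and again by comparison $u^*_{k+1}\le u^*_{k}$. This completes (i); I would also record that each $u^*_k>0$ on $\overline\Omega$, which follows because $\int_\Omega (a-[a-u^*_{k-1}]_+)\,dx \ge \int_\Omega \min\{a,u^*_{k-1}\}\,dx$ stays bounded below in a way ensuring the principal eigenvalue condition for positivity persists — this is the point I would need to check carefully, since if the effective growth rate $\min\{a,u^*_{k-1}\}$ becomes too negative the attracting steady state could degenerate to $0$, in which case the conclusion $[a_{\min}]_+ = 0$ must still be reconciled.

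For part (ii), by (i) the sequence $u^*_k(x)$ is nonincreasing in $k$ and bounded below by $0$, so $u^*_\infty(x):=\lim_{k\to\infty}u^*_k(x)$ exists pointwise, and $0\le u^*_\infty\le u^*$. Using elliptic $W^{2,p}$ estimates (uniform in $k$, since the reaction terms are uniformly bounded by \eqref{eq:00}-type bounds and $\|a\|_\infty$) together with Sobolev embedding, I would upgrade this to convergence in $C^1(\overline\Omega)$ along a subsequence, hence $u^*_\infty\in W^{2,p}(\Omega)$ solves
\begin{equation*}
0 = d\Delta u^*_\infty + u^*_\infty\bigl(a(x)-[a(x)-u^*_\infty(x)]_+ - u^*_\infty(x)\bigr),\quad \tfrac{\partial u^*_\infty}{\partial n}=0.
\end{equation*}
On the set where $u^*_\infty>0$, the bracketed reaction coefficient equals $\min\{a,u^*_\infty\}-u^*_\infty$, which is $\le 0$, so $u^*_\infty$ is a subsolution of $d\Delta w + w(0-w)=0$ type; more directly, $d\Delta u^*_\infty = -u^*_\infty(\min\{a,u^*_\infty\}-u^*_\infty) = u^*_\infty(u^*_\infty-\min\{a,u^*_\infty\})\ge 0$, so $u^*_\infty$ is subharmonic, and with Neumann boundary condition the maximum principle forces $u^*_\infty$ to be constant. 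Call this constant $c\ge 0$. If $c>0$, plugging into the equation gives $c(\min\{a(x),c\}-c)=0$ for all $x$, i.e. $\min\{a(x),c\}=c$, i.e. $a(x)\ge c$ for all $x\in\overline\Omega$, hence $c\le a_{\min}$; conversely one shows $c\ge [a_{\min}]_+$ by comparing $u^*_k$ from below with the (constant) attracting state of the problem with growth rate $\ge a_{\min}$ — concretely, since $\min\{a,u^*_{k-1}\}\ge a_{\min}$ wherever relevant, a standard lower-solution argument shows $u^*_k\ge [a_{\min}]_+$ for all $k$. Combining, $c=[a_{\min}]_+$. The case distinction when $a_{\min}\le 0$ (so $[a_{\min}]_+=0$) needs the observation that then the effective growth rates drive the attracting state to $0$; I would handle this by showing $\liminf u^*_k \le [a_{\min}]_+ + \epsilon$ directly via a constant supersolution argument on the $(k+1)$st equation using $\max_\Omega u^*_k$ from the previous step. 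Since the limit $c$ is independent of the subsequence, the full sequence converges, and because the limit is a constant the $C^1$ convergence is in particular uniform, giving $\|u^*_k-[a_{\min}]_+\|_\infty\to 0$.

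The main obstacle I anticipate is the lower bound, i.e. showing $\liminf_{k\to\infty}\min_{\overline\Omega}u^*_k \ge [a_{\min}]_+$ (and, when $a_{\min}\le 0$, controlling that $u^*_k$ does not prematurely collapse to zero while still converging to $0=[a_{\min}]_+$). The upper bound and the monotonicity are clean comparison-principle arguments, and the identification of the limit as a constant is a one-line maximum-principle observation; but the lower bound requires constructing, at each stage $k$, a positive constant subsolution of the $k$-th logistic problem whose value does not erode below $[a_{\min}]_+$, which forces one to track how $\min\{a,u^*_{k-1}\}$ interacts with the spatially varying $a$. I would likely use the variational characterization of the principal eigenvalue of $-d\Delta - (a-[a-u^*_{k-1}]_+)$ with Neumann conditions to show it stays below zero (ensuring positivity and a uniform lower bound on $u^*_k$) until $u^*_{k-1}$ has already descended near $[a_{\min}]_+$, and then close the argument by a contradiction/monotone-limit comparison.
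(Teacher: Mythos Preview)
Your proposal is correct and follows essentially the same route as the paper: induction with supersolutions for (i), then passing to the limit via elliptic estimates, identifying the limit as a constant (you use subharmonicity with Neumann data; the paper integrates to get $U^*[a-U^*]_-=0$), and sandwiching that constant with $[a_{\min}]_+$. The lower bound you flag as the main obstacle is in fact immediate---when $a_{\min}>0$, the constant $a_{\min}$ is a subsolution of \eqref{d:3} for every $k$ (since $a-[a-u^*_k]_+=\min\{a,u^*_k\}\ge a_{\min}$ once you know $u^*_k\ge a_{\min}$, which holds by the same induction), so no eigenvalue tracking is needed.
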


\begin{proof}
$(i)$ We prove this by induction. We first note $u_0^*(x)=u^*(x)$ is super-solution of \eqref{d:3} with $k=0$ and initial 
condition $u^*(x)$. Hence, 
by the comparison principle for parabolic equations, we obtain that  
$$ 
u^*_1(x)=\lim_{t\to\infty}u_{1}(t,x;u^*_0)\leq u_1(1,x;u^*_0) \le u^*_{0}(x), \quad \forall\ x\in\Omega.
$$
Suppose by induction hypothesis that $(i)$ holds for $k=1,\cdots,m$, with $m\geq 1$. 
As in the previous case we note that $u^*_m(x)$ is also 
a super-solution of \eqref{d:3} with $k=m-1$ and initial
condition $u_m^*(x)$ 
because $a(x)-(a(x)-u^*_m(x))_+\leq a(x)-(a(x)-u^*_{m-1}(x))_+$ by induction hypothesis. Therefore similar arguments 
yields that the result also holds for $k=m+1$.

\medskip

$(ii)$ By $(i)$, we have that $u^*_k(x)\to U^*(x)$ as $k\to\infty$ for some $U^*\in L^{\infty}(\Omega)$. Moreover, since $\sup_{k\geq 0}\|u^*_k\|\leq \|a\|_{\infty}$, by estimates for elliptic equations, 
we have that $u_{k}^*(x)\to U^*(x)\in W^{2,p}(\Omega)$ for every $p>N$,
and $U^*(x)$ satisfies
\begin{equation}\label{d:6}
\begin{cases}
0=d\Delta U^*+U^*(a(x)-\left[ a(x)-U^*(x)\right]_{+}-U^*)\quad & x\in\Omega,\cr
\frac{\partial U^*}{\partial n}=0 & x\in\partial\Omega.
\end{cases}
\end{equation}
Observe that $ a(x)-\left[ a(x)-U^*(x)\right]_{+}-U^*=-\left[ a(x)-U^*(x)\right]_{-}$  and integrating the first equation of \eqref{d:6} yields that 
$$ 
\int_{\Omega}U^*\left[ a(x)-U^*(x)\right]_{-}dx=0,
$$  which implies that 
\begin{equation}\label{d:10}
U^*(x)\left[ a(x)-U^*(x)\right]_{-}=0,
\end{equation} 
since $x\mapsto U^*(x)\left[ a(x)-U^*(x)\right]_{-}$ is continuous. Thus, since $U^*(x)\ge 0$, we conclude that 
$$ 
0\leq U^*(x)\leq \max\{0,a(x)\}\quad \forall\ x\in\Omega.
$$ Hence, by \eqref{d:6}, we conclude that $U^*(x)=c^*$ for some non-negative constant $c^*$. So, by \eqref{d:10}, we obtain that $ c^*\leq [a_{\min}]_{+}$. Hence if $[a_{\min}]_{+}=0$, the result follows. So, it remains 
to consider the case $[a_{\min}]_{+}>0$. That is, $[a_{\min}]_{+}=a_{\min}>0$. In this case, it is clear that $u(t,x)=a_{\min}$ is a sub-solution of \eqref{d:3}
with initial condition $a_{min}$. Hence, we have 
$$a_{\min}\leq \lim_{t\to\infty}u_k(x,t;a_{\min})= u^+_{k}(x)\quad 
\forall x\in\Omega, \ k\geq 0. $$
As a result, we have that $a_{\min}\leq U^*(x)=c^*\leq a_{\min}$. This completes the proof.
\end{proof}

\section{Proof of Proposition \ref{Tm-01}}

In this section we present the proof of Proposition \ref{Tm-01}. We start with the local existence of classical solutions.

\begin{lem}\label{Local-existence-tm}
For every  $u_0,v_0\in [C(\overline{\Omega})]^+$, there is a unique $T_{max}>0$ such that \eqref{Main-eq1} has a unique classical solution $(u(x, t;u_0,v_0),v(x,t;u_0,v_0))$ satisfying $(u(x,0;u_0,v_0),v(x,0;u_0,v_0))=(u_0(x),v_0(x))$ on $[0,T_{\max})\times\Omega$. Moreover, if $T_{\max}<\infty$, then 
\begin{equation}\label{eq:extension-crit} 
\lim_{t\to T_{\max}^-}\|u(\cdot,t;u_0,v_0)\|_{\infty}=+\infty.
\end{equation}
\end{lem}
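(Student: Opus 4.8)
The plan is to eliminate the $v$-component and reduce \eqref{Main-eq1} to a single nonlocal-in-time semilinear parabolic equation for $u$. For a fixed nonnegative $u\in C(\overline{\Omega}\times[0,T])$, the second equation of \eqref{Main-eq1} is, for each frozen $x$, a scalar logistic ODE $v_t=v(a(x)-u(x,t)-v)$ with $v(x,0)=v_0(x)$, whose solution is given by the explicit Bernoulli formula
\[
v(x,t)=\mathcal{V}[u](x,t):=\frac{v_0(x)\,e^{\int_0^t(a(x)-u(x,s))\,ds}}{\,1+v_0(x)\int_0^t e^{\int_0^s(a(x)-u(x,r))\,dr}\,ds\,}.
\]
This expression is well defined and nonnegative whenever $u\geq 0$ (its denominator is $\geq 1$), it vanishes identically on $\{v_0=0\}$, and it satisfies $0\le \mathcal{V}[u](x,t)\le \max\{\|v_0\|_\infty,\|a\|_\infty\}$ by comparison with $w'=w(a(x)-w)$. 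Substituting, $u$ must solve
\[
u_t=d\Delta u+u\big(a(x)-u-\mathcal{V}[u]\big)\ \text{ in }\ \Omega,\qquad \frac{\partial u}{\partial n}=0\ \text{ on }\ \partial\Omega,\qquad u(\cdot,0)=u_0,
\]
equivalently the integral equation $u(t)=e^{tdA}u_0+\int_0^t e^{(t-s)dA}F(u)(s)\,ds$, where $A=\Delta$ with Neumann boundary conditions generates an analytic $C_0$-semigroup on $X:=C(\overline{\Omega})$ and $F(u)(s):=u(s)\big(a-u(s)-\mathcal{V}[u](s)\big)$.

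First I would show that on bounded subsets of $C([0,T]:X)$ the map $u\mapsto\mathcal{V}[u]$ is Lipschitz into $C([0,T]:X)$ with a Lipschitz constant that is $O(T)$ as $T\to0$; this follows by differentiating the Bernoulli formula in the parameter $u$, or by a Gronwall estimate applied to the difference of the two logistic ODEs corresponding to two choices of $u$, using that the denominator stays $\geq 1$ and the exponents are controlled in the sup-norm. Consequently $F$ is Lipschitz on balls of $C([0,T]:X^+)$ with the same $T$-smallness, and a standard contraction mapping argument on a closed ball of $C([0,T]:X^+)$ centered at the path $t\mapsto e^{tdA}u_0$ produces, for $T=T(\|u_0\|_\infty,\|v_0\|_\infty)$ small enough, a unique mild solution $u\geq 0$ (nonnegativity is preserved since $F$ is quasi-positive). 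I then set $v:=\mathcal{V}[u]$; feeding an arbitrary classical solution back through the $v$-formula and the integral equation, followed by Gronwall, gives uniqueness of $(u,v)$ on any common time interval.

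Next I would bootstrap the regularity to match Definition \ref{def1}. Analytic-semigroup smoothing, together with Hölder-in-time continuity of $s\mapsto F(u)(s)$ (itself a consequence of the integral equation and the embedding $W^{2,p}(\Omega)\hookrightarrow C^{1+\beta}(\overline{\Omega})$, $p>N$), gives $u\in C((0,T):W^{2,p}(\Omega))\cap C^1((0,T):X)\cap C([0,T):X^+)$ for all $p>N$; since $v=\mathcal{V}[u]$ is an explicit expression in $u$ and its time-integrals, it lies in $C([0,T):X^+)\cap C^1((0,T):X)$ and satisfies the $v$-equation in the classical sense, and $u$ solves the first equation as an abstract evolution equation in $X$. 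Letting $T_{\max}$ be the supremum of such $T$ (unambiguous by uniqueness) yields the maximal solution. For the blow-up alternative, note that $u\geq 0$ forces $v_t\le v(a(x)-v)$, hence $\|v(\cdot,t)\|_\infty\le \max\{\|v_0\|_\infty,\|a\|_\infty\}$ for all $t\in[0,T_{\max})$; thus if $T_{\max}<\infty$ and $\limsup_{t\to T_{\max}^-}\|u(\cdot,t)\|_\infty<\infty$, then $F(u)$ is bounded in $X$ on $[0,T_{\max})$, interior parabolic estimates make $\{u(\cdot,t):t\in[0,T_{\max})\}$ precompact in $X$ with a limit as $t\to T_{\max}^-$, and restarting the local construction from that limit extends the solution past $T_{\max}$, contradicting maximality. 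Hence $T_{\max}<\infty$ implies \eqref{eq:extension-crit}.

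The step I expect to be the main obstacle is the first analytic estimate: establishing that $u\mapsto\mathcal{V}[u]$ is Lipschitz on bounded sets of $C([0,T]:X)$ with a constant vanishing as $T\to0$, uniformly over the ball used in the fixed-point scheme; one must bound the exponential factors and their differences in the sup-norm and verify that the overall constant genuinely scales like $T$. Once this is in place, everything else — the contraction mapping, the invariance of the positive cone, the regularity bootstrap, and the continuation argument — is a routine assembly of standard semigroup and parabolic-regularity facts.
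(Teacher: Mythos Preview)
Your proposal is correct and follows essentially the same route as the paper: both eliminate $v$ via the explicit Bernoulli formula $\mathcal{V}[u]$, set up a contraction mapping for $u$ in $C([0,T]:C(\overline{\Omega}))$ whose key ingredient is the $O(T)$-Lipschitz estimate for $u\mapsto\mathcal{V}[u]$ (the paper obtains it via the mean-value theorem on the exponentials, exactly as you anticipate), and then bootstrap regularity by showing the forcing term is H\"older in time. The only cosmetic difference is that the paper works with the shifted generator $d\Delta-\lambda$ (with $\lambda>\|a\|_\infty+R$) so that nonnegativity of $u$ follows directly from the maximum principle, whereas you invoke quasi-positivity; the paper also dispatches the blow-up alternative in one line as a ``classical extension argument,'' while you spell it out.
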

\begin{proof} We first note that if \eqref{Main-eq1} has a local classical solution on some $[0,T)\times\Omega$, the uniqueness and  extension criterion \eqref{eq:extension-crit} follows 
from classical extension argument in the literature. Therefore, we will only show that \eqref{Main-eq1} has a local classical solution.   We used fixed point arguments to prove the result.
Let $R>\|u_0\|_{\infty}$ and $T>0$ be given, and define
 $$ 
 \mathcal{S}_{R,T}:=\{u\in C([0,T]:C(\overline{\Omega})) \, :\, u(x,0)=u_0(x) \, \text{and}\, \|u\|_{\infty}\leq R\}
 $$
endowed with sup-norm.  Next, for every $\lambda>\|a\|_{\infty}+R$ and  $u\in  \mathcal{S}_{R,T}$, define the integral operator 
 \begin{align}\label{u-eq1}
     \mathcal{T}(u)(t)=T_{\lambda,d}(t)[u_0] + \int_0^t T_{\lambda,d}(t-s)[(\lambda+a-u(s)-\mathcal{V}(u)(s))u(s) ]ds
 \end{align}
 where 
 \begin{equation}\label{v-eq1}
    \mathcal{V}(u)(s)=\frac{v_0(x)e^{\int_0^t(a(x)-u(x,s))ds}}{1+v_0(x)\int_0^t\left[e^{\int_0^s(a(x)-u(x,\tau))d\tau}\right]ds} 
 \end{equation}
 and $T_{\lambda,d}(t)$ denotes the analytic $c_0-$semigroup generated by $A_{p}u=d\Delta u-\lambda u$ on $L^p(\Omega)$ with $\text{Dom}(A_p)=\{w\in W^{2,p}(\Omega)\,: \, \frac{\partial w}{\partial n}=0 \,\,\text{on}\,\,\partial\Omega \}$.  It is clear from \eqref{v-eq1} that 
 \begin{equation}
 \mathcal{V}(u)\in C([0,\infty):[C(\overline{\Omega})]^+)\cap C^1((0,\infty):C(\overline{\Omega})) \quad \forall\, u\in \mathcal{S}_{R,T},
 \end{equation}
 with
 \begin{equation}\label{v-eq2}
     \partial_t\mathcal{V}(u)=(a-u-\mathcal{V}(u))\mathcal{V}(u),
 \end{equation}
 and \begin{equation}\label{v-eq3}
     \lim_{t\to\infty}\|\mathcal{V}(u)(t)-v_0\|_{C(\overline{\Omega})}=0.
 \end{equation}
  We note that by considering 
$$
\textrm{Dom}(A):=\{ u \in W^{2,p}(\Omega),\ \, p>N,\ \tfrac{\partial u}{\partial n}=0\, \,\textrm{on}\, \,\partial\Omega,\ Au\in C(\overline{\Omega}) \}
$$
where $Au=d\Delta u-\lambda u$, it is well known
(see \cite[Theorem 2] {Stewart}) that $A$ generates an analytic semigroup on $C(\overline{\Omega})$,  given by $T_{\lambda,p}(t)$. Thus it follows from \eqref{u-eq1} that $\mathcal{T}(u)\in C([0,T]:C(\overline{\Omega}))$ for every $u\in\mathcal{S}_{R,T}$. Note from the choice of $\lambda$ that $u(x,t)\ge 0$. Hence, the maximum principle implies that  $\mathcal{T}(u)\in C([0,T]:[C(\overline{\Omega})]^+)$ for every $u\in\mathcal{S}_{R,T}$.
 
 Next, we show that for $0<T\ll 1$, the map $\mathcal{T}$ maps $\mathcal{S}_{R,T}$ into itself. Indeed, this easily follows from the continuity of the maps $\mathcal{T}$ at $t=0$, since $\lim_{t\to0}\mathcal{T}(u)(t)=u_0$ uniformly in $u\in\mathcal{S}_{R,T}$ and $\|u_0\|_{C(\overline{\Omega})}<R$.
 
 Finally, we claim that the map $\mathcal{T}$ is a contraction for $0<T\ll 1$. To this end, observe that it is enough to show that the map $t\mapsto \mathcal{V}(u)(t)$ is Lipschitz continuous. This in turn follows from the fact that 
 \begin{align}
    & \left|e^{\int_0^t(a(x)-u_1(x,s))ds}-e^{\int_0^t(a(x)-u_2(x,s))ds}\right|\cr
    \leq & \left|\int_0^t(u_1(x,s)-u_2(x,s))ds\right|\sup_{\theta\in[0,1]}
    e^{\int_{0}^t\left[\theta u_1(x,s)+(1-\theta)u_2(x,s)\right]ds}\cr
    \leq & Te^{RT}\|u_1-u_2\|_{\mathcal{S}_{R,T}},
 \end{align}
for every $u_1,u_2\in\mathcal{S}_{R,T}$, where we have used the mean-value theorem. 
Therefore, for $0<T\ll 1$, it follows from the contraction mapping theorem that the map $\mathcal{S}_{R,T}\ni u\mapsto\mathcal{T}(u) \in \mathcal{S}_{R,T}$ has a unique fixed point.

To complete the proof of the regularity of the function $[0,T]\ni t\mapsto  u(\cdot,t)\in C([0,T]:C(\overline{\Omega}))$ we set 
$$
f(t)=(\lambda+a-u(t)-v(t))u(t) 
$$
where $v(t)=\mathcal{V}(u)(t)$ and show that the function $(0,T)\ni t\mapsto f(t)\in C(\overline{\Omega})$ is locally H\"older continuous. Hence, the regularity follows  by \cite[Theorem 1.2.1, Page 43]{Amann}. Again by the regularity of $v(t)$, to show that $f(t)$ is locally H\"older  continuous, it is enough to show that the function \begin{equation}
t\mapsto u(t)\in  C(\overline{\Omega})
\end{equation}
is locally H\"older continuous. Let $0<\alpha<1$ and let $X^{\alpha}$ denotes the fractional power space of $-A_p$. By $L_p-L_q$ estimates,  there exist constants $c_\alpha>0$  and $\omega>0$ such that 
$$ 
\|(T_{\lambda,p}(h)-I)T_{\lambda,p}(t)w\|_{C(\overline{\Omega})}\leq c_{\alpha} h^{\alpha}t^{-\alpha}e^{-\omega t}\|w\|_{C(\overline{\Omega})}\quad \forall w\in C(\overline{\Omega}).
$$
Therefore, for every $0<t<t+h<T$, we have 
\begin{align*}
&\,\|u(t+h)-u(t)\|_{C(\overline{\Omega})}\cr
\leq & \|(T_{\lambda,d}(h)-I)T_{\lambda,d}(t)u_0\|_{C(\overline{\Omega})} +\int_0^t\|(T(h)-I)T(t-s)f(s)\|_{C(\overline{\Omega})}ds\cr
&+\int_t^{t+h}\|T(t+h-s)f(s)\|_{C(\overline{\Omega})}ds\cr
\leq &\, M\left( h^{\alpha}+ h^{\alpha}+h\right),
\end{align*}
where $M$ is a constant depending on $R,\alpha,p$ and $T$. As a result, we have the desired result.
\end{proof}

Next, we complete the proof of Proposition \ref{Tm-01}.

\begin{proof}[Proof of Proposition {\rm\ref{Tm-01}}]
Let $u_0,v_0\in [C(\overline{\Omega})]^+$ be given  and let  $(u(x,t;u_0,v_0),v(x,t;u_0,v_0))$ denotes the classical solution given by Lemma \ref{Local-existence-tm}. Since $v(x,t;u_0,v_0)\geq 0$, we have that 
\begin{equation}\label{p:eq:1} 
u_t\leq d\Delta u+u(a(x)-u),\quad \forall\, x\in\Omega, t\in(0,T_{\max}).
\end{equation}
Hence, the comparison principle implies that 
$u(x,t;u_0,v_0)\leq \max\{\|u_0\|_{\infty},\|a\|_{\infty}\}$
for all
$x\in\Omega$ and  $t\in(0,T_{\max})$.
So, by \eqref{eq:extension-crit}, we conclude that $T_{\max}=+\infty$. Again by \eqref{p:eq:1} and the comparison principle for parabolic equations, we have that 
$\limsup_{t\to\infty}\|u(\cdot,t;u_0,v_0)\|_{\infty}\leq \|a\|_{\infty}.
$
Similar arguments yield that $ 
\limsup_{t\to\infty}\|v(\cdot,t;u_0,v_0)\|_{\infty}\leq \|a\|_{\infty}.
$  
\end{proof}

\section{Proof of Theorem \ref{Main-tm-1}}

In this section, we present the proof of Theorem \ref{Main-tm-1}. Throughout this section we suppose $u_0(x)$ and $v_0(x)$ are chosen fixed and satisfy the assumption of Theorem \ref{Main-tm-1}. We first prove a few preliminary results. 

\begin{lem}\label{lemma3}
It holds that 
\begin{equation}\label{eq:lower-bound-for-v}
\liminf_{t\to\infty}v(x,t;u_0,v_0)\geq [a(x)-u^*(x)]_+,\quad \forall\ x\in\Omega,
\end{equation}
where $u^*(x)$ is given by \eqref{eq:steady-state}. 
In particular, the first species can not drive the second species to extinction. 
\end{lem}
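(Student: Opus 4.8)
\emph{Proof proposal.} The plan is a two-step comparison argument. First I would exploit that $v\ge 0$ to bound $u$ from above asymptotically by the logistic steady state $u^*$, and then I would feed this bound into the $v$-equation, which at each fixed $x$ is merely a scalar logistic ODE, to bound $v$ from below.

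\textbf{Step 1: asymptotic upper bound for $u$.} Since $v(x,t;u_0,v_0)\ge 0$, the first equation of \eqref{Main-eq1} gives $u_t\le d\Delta u+u(a(x)-u)$, so $u(\cdot,t;u_0,v_0)$ is a subsolution of \eqref{Main-eq2} with initial datum $u_0$. By the comparison principle for parabolic equations and \eqref{eq:single-species-stability} (the case $u_0\equiv 0$ being trivial, as then $u\equiv 0\le u^*$), one gets
\[
\limsup_{t\to\infty}\big(u(x,t;u_0,v_0)-u^*(x)\big)\le 0\qquad\text{uniformly in }x\in\overline\Omega;
\]
equivalently, for each $\ep>0$ there is $T_\ep>0$ with $u(x,t;u_0,v_0)\le u^*(x)+\ep$ for all $x\in\overline\Omega$, $t\ge T_\ep$.

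\textbf{Step 2: pointwise lower bound for $v$.} Fix $x\in\Omega$. If $v_0(x)=0$, then $v(x,t;u_0,v_0)\equiv 0$, while \eqref{eq:initial-cond1} forces $a(x)\le 0$, so $[a(x)-u^*(x)]_+=0$ and \eqref{eq:lower-bound-for-v} is trivially true at such $x$. Otherwise $v_0(x)>0$, hence $v(x,t;u_0,v_0)>0$ for all $t\ge0$. For $t\ge T_\ep$, the second equation of \eqref{Main-eq1} together with Step 1 gives
\[
\partial_t v(x,t;u_0,v_0)\ \ge\ v(x,t;u_0,v_0)\big(a(x)-u^*(x)-\ep-v(x,t;u_0,v_0)\big),
\]
so $t\mapsto v(x,t;u_0,v_0)$ is a supersolution on $[T_\ep,\infty)$ of the scalar logistic ODE $w'=w(b_\ep(x)-w)$, with $b_\ep(x):=a(x)-u^*(x)-\ep$. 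Taking $w$ with $w(T_\ep)=v(x,T_\ep;u_0,v_0)>0$ and invoking the ODE comparison principle, $v(x,t;u_0,v_0)\ge w(t)$ for $t\ge T_\ep$; since $w(t)\to[b_\ep(x)]_+$, this yields $\liminf_{t\to\infty}v(x,t;u_0,v_0)\ge[a(x)-u^*(x)-\ep]_+$. Letting $\ep\downarrow0$ proves \eqref{eq:lower-bound-for-v}. The ``in particular'' statement is then immediate: by Lemma \ref{lemma1} the set $\Omega^*=\{x\in\Omega:a(x)-u^*(x)>0\}$ is nonempty, so $\liminf_{t\to\infty}v>0$ there and $v$ cannot be driven to extinction.

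\textbf{Expected main obstacle.} There is no deep difficulty; the one thing to be careful about is the transition from the parabolic comparison for $u$ to a \emph{pointwise-in-}$x$ comparison for $v$. This is legitimate only because the $v$-equation carries no spatial operator, so for frozen $x$ it is an ODE — but to use it one must first have the uniform-in-$x$ estimate $u\le u^*+\ep$ for large $t$, so that the coefficient $a(x)-u(x,t)$ is bounded below by $a(x)-u^*(x)-\ep$ simultaneously for all $x$. Handling the zero set of $v_0$ is precisely the role of assumption \eqref{eq:initial-cond1}. Finally, if one does not wish to assume $u^*>0$ under \eqref{H2}, the same argument runs with $u^*$ replaced by $0$ when the logistic equation \eqref{Main-eq2} has no positive steady state.
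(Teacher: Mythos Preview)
Your proposal is correct and follows essentially the same approach as the paper: bound $u$ above by the solution of \eqref{Main-eq2} via parabolic comparison, use \eqref{eq:single-species-stability} to get $u\le u^*+\ep$ for large $t$, feed this into the $v$-equation as a pointwise logistic ODE inequality, apply ODE comparison, and let $\ep\to0$; the non-extinction claim is then read off from Lemma~\ref{lemma1}. Your treatment is in fact slightly more explicit than the paper's in handling the zero set of $v_0$ via \eqref{eq:initial-cond1} and in noting what happens when $u^*\equiv 0$, but the argument is the same.
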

\begin{proof}
Since $v(x,t;u_0,v_0)\ge 0$ for every $x\in\Omega$ and $t\geq 0$, it follows 
that
$
u(x,t;u_0,v_0)\leq u(x,t;u_0)
$
for all $x\in\Omega$ and $t\ge 0$,
where $u(x,t;u_0)$ denotes the unique classical solution of \eqref{Main-eq2}. Thus, by \eqref{eq:single-species-stability}, it holds that
$$ 
\limsup_{t\to\infty}\sup_{x\in\Omega}\left[u(x,t;u_0,v_0)-u^*(x)\right]\leq \limsup_{t\to\infty}\sup_{x\in\Omega}\left[u(x,t;u_0)-u^*(x)\right]=0.
$$
So, for every $\varepsilon>0$ there is $t_\varepsilon\gg 1$ such that 
$u(x,t;u_0,v_0)<u^*(x)+\varepsilon$ for all $x\in\Omega$ and $t\geq t_\varepsilon$.
Whence, from the second equation in \eqref{Main-eq1}, we deduce that 
\begin{equation}\label{eq:03}
v_t\geq v(a(x)-u^*(x)-\varepsilon-v),\quad \forall t\geq t_\varepsilon, x\in\Omega.
\end{equation}
The comparison principle for ODEs  thus implies that 
$$ 
\liminf_{t\to\infty}v(x,t;u_0,v_0)\geq \left[a(x)-u^*(x)-\varepsilon\right]_+,\quad x\in\Omega.
$$
Letting $\varepsilon\to0^+$ in the last inequality leads to \eqref{eq:lower-bound-for-v}. The last statement of Lemma \ref{lemma3} follows from Lemma \ref{lemma1} and inequality \eqref{eq:lower-bound-for-v}.
\end{proof}

Next, we improve the previous Lemma to

\begin{lem}\label{Lem1} Suppose that $u^*(x)>0$.
For every $x_0\in\Omega^*$, it holds that 
\begin{equation*}
\liminf_{t\to\infty}v(x,t;u_0,v_0)>a(x_0)-u^*(x_0)>0.
\end{equation*}
Furthermore,  there exist $0<\varepsilon_0\ll 1$ and $T_0\gg 1$ such that 
\begin{equation}\label{d:9}
u(x,t;u_0,v_0)\leq (1-\varepsilon_0)u^*(x),\quad \forall t\geq T_0,\ x\in\Omega.
\end{equation}

\end{lem}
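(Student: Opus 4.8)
The plan is to establish the second assertion (the existence of $\varepsilon_0$ and $T_0$) first and then deduce the first from it. \textbf{Step 1: a uniform lower bound for $v$.} I would first sharpen \eqref{eq:lower-bound-for-v} to
\begin{equation*}
\liminf_{t\to\infty}\ \inf_{x\in\Omega}\Big(v(x,t;u_0,v_0)-[a(x)-u^*(x)]_+\Big)\ \ge\ 0 .
\end{equation*}
The argument is the one used in Lemma~\ref{lemma3}: from $u(x,t;u_0,v_0)\le u(x,t;u_0)\to u^*(x)$ (uniformly in $x$) one obtains $v_t\ge v(a-u^*-\varepsilon-v)$ for $t\ge t_\varepsilon$ and all $x$, and then compares with the logistic ODE. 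To make the conclusion uniform in $x$ I split $\overline{\Omega}=K_\delta\cup(\overline{\Omega}\setminus K_\delta)$ with $K_\delta:=\{x\in\overline{\Omega}:a(x)-u^*(x)\ge\delta\}\subset\Omega^*$: on $\overline{\Omega}\setminus K_\delta$ one has $[a-u^*]_+<\delta$ and $v\ge0$, so the bracket above exceeds $-\delta$ for all $t$; on the compact set $K_\delta$ the ODE growth rate $a-u^*-\varepsilon$ lies in a fixed compact subset of $(0,\infty)$ (for $\varepsilon<\delta$) and $v(\cdot,t_\varepsilon)$ is bounded below there by a positive constant (since $K_\delta\subset\{a>0\}$ is disjoint from $\{v_0=0\}$ by \eqref{eq:initial-cond1}), so the explicit logistic solutions converge to their equilibria uniformly over $K_\delta$. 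Letting first $\varepsilon\to0$, then $\delta\to0$, gives the displayed bound.

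\textbf{Step 2: one iteration step.} Fix a small $\varepsilon>0$; by Step~1 there is $t_1$ with $v(x,t;u_0,v_0)\ge[a(x)-u^*(x)]_+-\varepsilon$ for all $x$ and $t\ge t_1$, so by the first equation of \eqref{Main-eq1} and $u\ge0$,
\begin{equation*}
u_t\le d\Delta u+u\big(h_\varepsilon(x)-u\big),\qquad h_\varepsilon:=a-[a-u^*]_++\varepsilon=\min\{a,u^*\}+\varepsilon ,\qquad t\ge t_1 .
\end{equation*}
Since $h_\varepsilon\in C^\alpha(\overline{\Omega})$, comparison with the single-species equation \eqref{Main-eq2} for the growth rate $h_\varepsilon$, together with \eqref{eq:single-species-stability}, gives $\limsup_{t\to\infty}\sup_{x\in\Omega}\big(u(x,t;u_0,v_0)-\tilde u_\varepsilon(x)\big)\le0$, where $\tilde u_\varepsilon\ge0$ is the attractor of that equation. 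As $\varepsilon\downarrow0$ one has $h_\varepsilon\downarrow h_0:=\min\{a,u^*\}$ uniformly, hence by the monotone and continuous dependence of the logistic attractor on its growth rate (classical; see, e.g., \cite{CC2003}) together with Dini's theorem, $\tilde u_\varepsilon\downarrow\tilde u_0$ uniformly on $\overline{\Omega}$, where $\tilde u_0=u^*_1$ is exactly the function defined through \eqref{d:3} with $k=0$. Therefore $\limsup_{t\to\infty}\sup_{x\in\Omega}\big(u(x,t;u_0,v_0)-u^*_1(x)\big)\le0$.

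\textbf{Step 3: a strict gap, and the conclusion.} The heart of the matter is the claim $u^*_1(x)<u^*(x)$ for every $x\in\overline{\Omega}$. First, $u^*$ is a super-solution, but not a solution, of the elliptic problem solved by $u^*_1$, because $h_0=\min\{a,u^*\}\le a$ with strict inequality on the nonempty open set $\Omega^*$; hence $u^*_1\le u^*$. If $u^*_1\equiv0$ the strict inequality is obvious. Otherwise $u^*_1>0$ on $\overline{\Omega}$ (strong maximum principle and Hopf's lemma for the problem it solves), and $w:=u^*-u^*_1\ge0$ satisfies
\begin{equation*}
d\Delta w+(a-u^*-u^*_1)\,w=-u^*_1\,[a-u^*]_+\le0 ,
\end{equation*}
with right-hand side not identically zero on $\Omega^*$; the strong maximum principle for operators with bounded, sign-indefinite zeroth-order coefficient (applicable because $\min_{\overline{\Omega}}w\le0$) forces $w>0$ in $\Omega$, and then Hopf's lemma combined with $\partial w/\partial n=0$ on $\partial\Omega$ gives $w>0$ on $\overline{\Omega}$. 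By continuity and compactness, $1-\gamma:=\sup_{\overline{\Omega}}(u^*_1/u^*)<1$ for some $\gamma>0$. Now choose $\varepsilon$ so small that $\tilde u_\varepsilon\le u^*_1+\tfrac{\gamma}{4}\min_{\overline{\Omega}}u^*$, and then (by Step~2) $T_0$ so that $u(\cdot,t;u_0,v_0)\le\tilde u_\varepsilon+\tfrac{\gamma}{4}\min_{\overline{\Omega}}u^*$ for all $t\ge T_0$; this yields $u(x,t;u_0,v_0)\le(1-\tfrac{\gamma}{2})u^*(x)$ for $t\ge T_0$, i.e.\ \eqref{d:9} with $\varepsilon_0=\gamma/2$. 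The first assertion is then immediate: by \eqref{d:9} and the second equation of \eqref{Main-eq1}, $v_t\ge v\big(a-(1-\varepsilon_0)u^*-v\big)$ for $t\ge T_0$, so comparison with the logistic ODE at $x_0\in\Omega^*$ --- where $v(x_0,T_0)>0$ because $\Omega^*\subset\{a>0\}$ --- gives $\liminf_{t\to\infty}v(x_0,t;u_0,v_0)\ge a(x_0)-(1-\varepsilon_0)u^*(x_0)=(a(x_0)-u^*(x_0))+\varepsilon_0u^*(x_0)>a(x_0)-u^*(x_0)>0$, using $u^*(x_0)>0$.

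The step I expect to be the main obstacle is the strict inequality $u^*_1<u^*$ on all of $\overline{\Omega}$ in Step~3: the zeroth-order coefficient $a-u^*-u^*_1$ changes sign, so one must invoke the version of the strong maximum principle valid in that case and then the Hopf lemma at the Neumann boundary, and one must separately dispose of the degenerate possibility $u^*_1\equiv0$. A secondary technical point is the uniformity in $x$ in Step~1, which is exactly what legitimizes the parabolic comparison in Step~2.
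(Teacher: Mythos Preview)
Your proof is correct but follows a genuinely different route from the paper's. The paper works \emph{locally}: having the pointwise lower bound from Lemma~\ref{lemma3} only on a small ball around a fixed $x_0\in\Omega^*$, it constructs a continuous perturbation $a_\beta\le a$ with $a_\beta(x_0)<a(x_0)$ and $a-v(\cdot,t)\le a_\beta$ for large $t$, so that $u(\cdot,t)$ is eventually dominated by the positive steady state $u^*_\beta$ associated with the growth rate $a_\beta$; the strict inequality $u^*_\beta<u^*$ on $\overline{\Omega}$ then gives both conclusions at once. You instead work \emph{globally}: you first upgrade Lemma~\ref{lemma3} to a uniform-in-$x$ bound (your Step~1, which the paper postpones to Theorem~\ref{T:1}) and then compare $u$ with the logistic equation for the growth rate $\min\{a,u^*\}$, whose attractor is exactly the iterate $u^*_1$ of Theorem~\ref{Tm-appriori-result}. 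Your approach costs the extra Step~1 but dovetails with the iteration scheme, effectively carrying out the base case of Lemma~\ref{Lem2} inside the present lemma; the paper's approach is lighter here but relies on the ad hoc perturbed coefficient $a_\beta$. The strict gap you flagged as the main obstacle ($u^*_1<u^*$ for you, $u^*_\beta<u^*$ for the paper) requires in both cases the same strong-maximum-principle/Hopf argument for an elliptic operator with sign-changing zeroth-order coefficient; the paper invokes it in one line as ``comparison principle,'' whereas you make the mechanism explicit.
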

\begin{proof}
Let $x_0\in\Omega^*$ and set 
$$\underline{v}(x_0):=\liminf_{t\to\infty}v(x_0,t;u_0,v_0) .$$
By Lemma \ref{lemma3} and the comparison principle for ODEs,  it holds that 
\begin{equation}
a(x_0)\geq \underline{v}(x_0)\geq a(x_0)-u^*(x_0)>0.
\end{equation}

Let $\mu>0$ so that 
$$ 
m_\mu:=\min\{a(x)-u^*(x), \ |x-x_0|\leq \mu\}>0.
$$
By \eqref{eq:03}, for every $\beta>1$, there is $t_\beta\gg 1$ so that 
$$ 
v(x,t;u_0,v_0)\geq \frac{1}{\beta}m_\mu, \quad \forall\ t\geq t_\beta,\quad |x-x_0|\leq \mu.
$$
So we can perturb $a(x)$ around $x_0$ and get a function $a_\beta(x)$ satisfying
$$ 
\begin{cases}
a(x)-v(x,t;u_0,v_0)\leq a_\beta(x),\quad \forall x\in\Omega, t\geq t_\beta;\cr
 a_{\beta}(x)\leq a(x),\quad \forall x\in\Omega;
 \cr
 a_{\beta}(x)=a(x),\quad |x-x_0|\geq \mu; \cr
a_\beta(x_0)<a(x_0).
\end{cases}
$$
Let $u^*_\beta$ denote the unique  positive solution of \eqref{eq:steady-state} with $a(x)$ being replaced by $a_\beta(x)$.  Hence, it holds that 
\begin{equation}\label{A:1}
u(x,t;u_0,v_0)\leq u_{\beta}(x,t;u(\cdot,t_\beta,u_0,v_0)),\quad \forall\ t\geq t_\beta,
\end{equation}
where $ u_{\beta}(x,t;u(\cdot,t_\beta,u_0,v_0))$ denotes the classical solution of \eqref{Main-eq2}, with $a(x)$ being replaced by $a_{\beta}$, satisfying $u_{\beta}(x,t_\beta;u(\cdot,t_\beta,u_0,v_0))=u(x,t_\beta,u_0,v_0)$.  Similarly, as in the proof of Lemma \ref{lemma3}, we have that 
$$ 
\liminf_{t\to\infty}v(x,t;u_0,v_0)\geq [a(x)-u^*_\beta(x)]_+, 
\quad \forall\ x\in\Omega.
$$
But since $u^*_\beta(x)$ is a sub-solution of \eqref{eq:steady-state}, by the 
comparison principle, it holds that 
\begin{equation}\label{A:2}
 u^*_\beta(x)<u^*(x),\quad \forall \ x\in\overline{\Omega}.
\end{equation}
Hence
$$ 
\underline{v}(x_0)\geq a(x_0)- u^*_\beta(x_0)>a(x_0)- u^*(x_0).
$$
Now, observe from \eqref{A:1} that 
$$ 
\limsup_{t\to\infty}\sup_{x\in\Omega}[u(x,t;u_0,v_0)-u^*_{\beta}(x)]\leq 0.
$$
This together with \eqref{A:2} yield the last assertion of the Lemma.
\end{proof}

\begin{tm}\label{T:1} Suppose that $u^*(x)>0$. Let $\varepsilon_0>0$ be given by Lemma {\rm\ref{Lem1}}.
It holds that 
\begin{equation}\label{d:1}
\liminf_{t\to\infty}v(x,t;u_0,v_0)\geq [a(x)-(1-\varepsilon)u^*(x)]_+, \quad \text{uniformly in }\ x\in\Omega
\end{equation}
for every $0\leq \varepsilon< \varepsilon_0$.
\end{tm}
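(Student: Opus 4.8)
The plan is to combine the uniform upper bound on $u$ supplied by Lemma~\ref{Lem1} with a pointwise logistic comparison for the ODE satisfied by $v$, and then to upgrade the resulting pointwise convergence to one that is uniform in $x$. The one genuine difficulty is this last step, and I would resolve it by observing that on the region where the limiting profile $[a(x)-(1-\varepsilon)u^*(x)]_+$ is bounded away from $0$ the growth rate $a(x)$ is strictly positive, so hypothesis~\eqref{eq:initial-cond1} forces $v_0$ --- hence $v(\cdot,t)$ for $t>0$ --- to be bounded below there by a positive constant; this makes the logistic comparison converge exponentially fast at a rate that is uniform over that region.

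First I would fix $\varepsilon\in[0,\varepsilon_0)$ and write $b(x):=a(x)-(1-\varepsilon)u^*(x)\in C(\overline{\Omega})$, so that the assertion reads $\liminf_{t\to\infty}v(x,t;u_0,v_0)\ge[b(x)]_+$ uniformly in $x\in\Omega$. By Lemma~\ref{Lem1} there is $T_0\gg1$ with $u(x,t;u_0,v_0)\le(1-\varepsilon_0)u^*(x)\le(1-\varepsilon)u^*(x)$ for all $t\ge T_0$ and $x\in\Omega$ (using $\varepsilon\le\varepsilon_0$ and $u^*\ge0$); substituting this into the second equation of \eqref{Main-eq1} yields the differential inequality $v_t\ge v\bigl(b(x)-v\bigr)$ for $t\ge T_0$, $x\in\Omega$. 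I record $M:=\|v(\cdot,T_0;u_0,v_0)\|_{\infty}<\infty$, and note that the engine of the argument will be comparison of $v$ with the scalar logistic equation $\frac{d}{dt}w=w(b(x)-w)$.

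Next, fixing $\delta>0$, I would split $\overline{\Omega}$ into the compact set $K:=\{x\in\overline{\Omega}:b(x)\ge\delta\}$ and its complement. On $\overline{\Omega}\setminus K$ one has $[b(x)]_+<\delta$, whence $v(x,t;u_0,v_0)\ge0>[b(x)]_+-\delta$ for all $t$, so only $K$ requires work. For $x\in K$ we have $a(x)=b(x)+(1-\varepsilon)u^*(x)\ge\delta>0$, hence $x\notin\{a\le0\}$, hence $v_0(x)>0$ by \eqref{eq:initial-cond1}, hence $v(x,t;u_0,v_0)>0$ for $t>0$; by continuity and compactness $m_0:=\min_{x\in K}v(x,T_0;u_0,v_0)>0$. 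For $x\in K$ let $w(x,\cdot)$ solve $\frac{d}{dt}w=w(b(x)-w)$ with $w(x,T_0)=v(x,T_0;u_0,v_0)$; then $v(x,t;u_0,v_0)\ge w(x,t)$ for $t\ge T_0$ by the comparison principle for ODEs, and from the explicit logistic formula, using $\delta\le b(x)\le\|a\|_{\infty}$ and $m_0\le v(x,T_0;u_0,v_0)\le M$ on $K$, one obtains
\[
\bigl|w(x,t)-b(x)\bigr|\ \le\ \frac{\|a\|_{\infty}(\|a\|_{\infty}+M)}{m_0\,(1-e^{-\delta})}\,e^{-\delta(t-T_0)},\qquad t\ge T_0+1,\ x\in K,
\]
so $w(x,t)\to b(x)$ uniformly in $x\in K$. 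Hence there is $T_1\ge T_0$ with $v(x,t;u_0,v_0)\ge w(x,t)\ge b(x)-\delta=[b(x)]_+-\delta$ for all $t\ge T_1$, $x\in K$.

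Combining the two regions, for every $\delta>0$ there is $T_1$ with $v(x,t;u_0,v_0)\ge[b(x)]_+-\delta$ for all $t\ge T_1$ and $x\in\Omega$; letting $\delta\to0^+$ gives \eqref{d:1}, since $[b(x)]_+=[a(x)-(1-\varepsilon)u^*(x)]_+$. As flagged at the outset, I expect the main obstacle to be the \emph{uniformity}: the whole argument hinges on the uniform positive lower bound $m_0$ for $v(\cdot,T_0)$ over $K$, which is exactly where the structural hypothesis \eqref{eq:initial-cond1} (forcing $v_0>0$ wherever $a>0$) is indispensable, and which, together with $b\ge\delta$ on $K$, produces the $x$-independent exponential rate in the logistic comparison.
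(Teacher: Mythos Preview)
Your argument is correct and follows essentially the same approach as the paper: use the uniform bound $u\le(1-\varepsilon_0)u^*$ from Lemma~\ref{Lem1} to set up a logistic comparison for $v$, then extract the uniform convergence from the explicit logistic solution using a positive lower bound on $v(\cdot,T_0)$ over the relevant region. The only bookkeeping difference is that the paper compares with the stronger rate $a(x)-(1-\varepsilon_0)u^*(x)$ and works directly on $\Omega_\varepsilon=\{a\ge(1-\varepsilon)u^*\}$, where this rate is bounded below by $(\varepsilon_0-\varepsilon)\,u^*_{\inf}>0$, so no $\delta$-approximation is needed; you instead compare with $b(x)=a(x)-(1-\varepsilon)u^*(x)$ and recover uniformity via the $\delta\to0$ device, which is equally valid and arguably more transparent about where hypothesis~\eqref{eq:initial-cond1} enters.
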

\begin{proof}
Let $0\leq\varepsilon<\varepsilon_0$ be fixed. Note that it is enough to show that \eqref{d:1} holds on the set $ \Omega_{\varepsilon}:=\{x\in\Omega \ :\ a(x)\geq (1-\varepsilon)u^*(x)\}$. For, by Lemma \ref{Lem1}, we have that 
$$ 
v_{t}\geq v(a(x)-(1-\varepsilon_0)u^*-v),\quad \forall\ t\ge T_0, x\in\Omega.
$$
Hence, by the comparison principle for ODEs, we conclude that 

\begin{align}\label{d:2}
& v(x,t;u_0,v_0)\geq\cr &\frac{1}{e^{-(a(x)-(1-\varepsilon_0)u^*(x))(t-T_0)}\left[\frac{1}{v(x,T_0;u_0,v_0)}-\frac{1}{(a(x)-(1-\varepsilon_0)u^*(x))}\right]+\frac{1}{a(x)-(1-\varepsilon_0)u^*(x)}}\cr
\end{align}
for every $x\in\Omega_{\varepsilon}$ and $t\geq T_0$. Observe that 
$$ 
\sup_{x\in\Omega_{\varepsilon}}\left| \frac{1}{v(x,T_0;u_0,v_0)}-\frac{1}{(a(x)-(1-\varepsilon_0)u^*(x))}\right|<\infty
$$
and 
\begin{align*} 
\sup_{x\in\Omega_\varepsilon}e^{-(a(x)-(1-\varepsilon_0)u^*(x))(t-T_0)}\leq & \sup_{x\in \Omega_\varepsilon}e^{-(\varepsilon_0-\varepsilon)u^*(x)(t-T_0)}\cr
\leq & e^{-(\varepsilon_0-\varepsilon)u_{\inf}^*(t-T_0)}\to 0 \quad \text{as}\ t\to \infty.
\end{align*}
Hence, we conclude that the expression at the right hand side of the inequality \eqref{d:2} converges to $a(x)-(1-\varepsilon_0)u^*(x)$ uniformly on $\Omega_{\varepsilon}$, which combined with inequality \eqref{d:2} and the fact that $\varepsilon_0>\varepsilon$ lead to \eqref{d:1}.
\end{proof}

\begin{lem}\label{Lem2}
Let $\{u^*_{k}\}_{k\geq 0}$ be the sequence of Theorem {\rm\ref{Tm-appriori-result}}. Then 
 for every $k\ge 0$ such that $u_{k}^{*}(x)>0$ there is $\varepsilon_k>0$ such that 
$ \liminf_{t\to\infty}v(t,x;u_0,v_0)\geq [a(x)-(1-\varepsilon_k)u^*_k(x)]_{+}$ uniformly in  $x\in\Omega$.
\end{lem}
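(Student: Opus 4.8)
The plan is to argue by induction on $k$. The case $k=0$ is contained in Theorem~\ref{T:1} (take any $\varepsilon$ with $0<\varepsilon<\varepsilon_0$, where $\varepsilon_0$ is the number of Lemma~\ref{Lem1}). Now fix $k\ge0$, assume the statement at level $k$, and suppose $u^*_{k+1}>0$; we must produce $\varepsilon_{k+1}\in(0,1)$ with
\begin{equation}\label{pp:goal}
\liminf_{t\to\infty}v(x,t;u_0,v_0)\ge\bigl[a(x)-(1-\varepsilon_{k+1})u^*_{k+1}(x)\bigr]_+\qquad\text{uniformly in }x\in\Omega.
\end{equation}
By Theorem~\ref{Tm-appriori-result}(i), $u^*\ge u^*_k\ge u^*_{k+1}>0$; in particular $u^*>0$, so Lemma~\ref{lemma1} applies, and the induction hypothesis at level $k$ supplies $\varepsilon_k\in(0,1)$ with
\begin{equation}\label{pp:IH}
\liminf_{t\to\infty}v(x,t;u_0,v_0)\ge\bigl[a(x)-(1-\varepsilon_k)u^*_k(x)\bigr]_+\qquad\text{uniformly in }x\in\Omega.
\end{equation}

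For $\delta>0$ small, set $g_\delta:=\min\{a,(1-\varepsilon_k)u^*_k\}+\delta\in C(\overline\Omega)$ and let $\tilde u_\delta$ be the non-negative steady state of \eqref{Main-eq2}, with $a$ replaced by $g_\delta$, that attracts all non-negative solutions (possibly $\tilde u_\delta\equiv0$). From \eqref{pp:IH} there is $T_\delta>0$ with $v(x,t;u_0,v_0)\ge\bigl[a(x)-(1-\varepsilon_k)u^*_k(x)\bigr]_+-\delta$ for $t\ge T_\delta$; since $v\ge0$, the first equation of \eqref{Main-eq1} yields $u_t\le d\Delta u+u(g_\delta-u)$ on $(T_\delta,\infty)\times\Omega$ with $\partial u/\partial n=0$. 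Comparing with the corresponding single-species equation \eqref{Main-eq2} and using that its solutions converge to $\tilde u_\delta$, we get $u(x,t;u_0,v_0)\le\tilde u_\delta(x)+\delta$ for all large $t$. The heart of the matter is the claim: \emph{there exist $\varepsilon_{k+1}\in(0,1)$ and $\delta_0>0$ with $\tilde u_\delta+\delta\le(1-\varepsilon_{k+1})u^*_{k+1}$ on $\Omega$ for all $0<\delta\le\delta_0$}. Granting this, fix such a $\delta$; then $u\le(1-\varepsilon_{k+1})u^*_{k+1}$, hence $v_t\ge v\bigl(a-(1-\varepsilon_{k+1})u^*_{k+1}-v\bigr)$, on a half-line $t\ge T'$, and the logistic--ODE comparison argument from the proof of Theorem~\ref{T:1} (applied with $(1-\varepsilon_{k+1})u^*_{k+1}$ in place of $(1-\varepsilon_0)u^*$; on any set $\{a-(1-\varepsilon_{k+1})u^*_{k+1}\ge\eta\}$ the coefficient is bounded below by $\eta>0$ and, since $a>0$ there, $v_0>0$ there by \eqref{eq:initial-cond1}) gives \eqref{pp:goal}, completing the induction.

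It remains to prove the claim. As $\delta\to0^+$, $g_\delta\to g_0:=\min\{a,(1-\varepsilon_k)u^*_k\}$ uniformly; the $W^{2,p}$--estimates used in the proof of Theorem~\ref{Tm-appriori-result} make $\{\tilde u_\delta\}$ precompact in $C(\overline\Omega)$, and the single-species theory for \eqref{Main-eq2} forces $\tilde u_\delta\to\tilde u_0$ in $C(\overline\Omega)$, where $\tilde u_0$ is the non-negative attracting steady state for growth rate $g_0$ (with $\tilde u_0\equiv0$ possible). I claim $\sup_{\overline\Omega}(\tilde u_0/u^*_{k+1})<1$. If $\tilde u_0\equiv0$ this is clear, since $u^*_{k+1}>0$ on the compact set $\overline\Omega$. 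If $\tilde u_0>0$, then because $g_0\le\min\{a,u^*_k\}=a-[a-u^*_k]_+$, the function $\tilde u_0$ is a sub-solution of the steady-state problem attached to \eqref{d:3} at index $k$, whence $\tilde u_0\le u^*_{k+1}$; moreover $w:=u^*_{k+1}-\tilde u_0\ge0$ satisfies
\[
d\Delta w+\bigl(\min\{a,u^*_k\}-u^*_{k+1}-\tilde u_0\bigr)\,w=\tilde u_0\bigl(\min\{a,u^*_k\}-g_0\bigr)\ge0 ,
\]
and the right-hand side is not identically zero because $\min\{a,u^*_k\}-g_0>0$ exactly on the open set $\{x\in\Omega:a(x)>(1-\varepsilon_k)u^*_k(x)\}$, which is non-empty: otherwise $a\le(1-\varepsilon_k)u^*_k<u^*_k\le u^*$ on $\Omega$, so $\Omega^*=\emptyset$, contradicting Lemma~\ref{lemma1}. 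By the strong maximum principle and the Hopf boundary lemma (using $\partial w/\partial n=0$), $w>0$ on $\overline\Omega$, so $\inf_{\overline\Omega}(u^*_{k+1}-\tilde u_0)>0$; as $u^*_{k+1}\le\|a\|_\infty$, this gives $\tilde u_0\le(1-2\varepsilon_{k+1})u^*_{k+1}$ for a suitable $\varepsilon_{k+1}\in(0,1)$. Finally, $\tilde u_\delta\to\tilde u_0$ uniformly together with $\inf_{\overline\Omega}u^*_{k+1}>0$ give $\tilde u_\delta+\delta\le(1-\varepsilon_{k+1})u^*_{k+1}$ for all small $\delta$, which is the claim.

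I expect the main obstacle to be this last claim, that $\tilde u_0$ lies \emph{strictly} below $u^*_{k+1}$ by a fixed fraction. A direct super-solution argument with $(1-\varepsilon_{k+1})u^*_{k+1}$ for the $g_\delta$--equation fails, because $g_0$ and $\min\{a,u^*_k\}$ coincide on part of $\Omega$; the gap must be produced globally, through the strong maximum principle/Hopf lemma for $w$, and this is precisely the step that exploits the spatial heterogeneity, via $\Omega^*\ne\emptyset$ (Lemma~\ref{lemma1}). A subsidiary point is the continuity $\tilde u_\delta\to\tilde u_0$ as $\delta\to0^+$ in the degenerate regime where $\tilde u_0$ (and possibly $\tilde u_\delta$ itself) vanishes; this is routine from the $W^{2,p}$--estimates and the single-species theory for \eqref{Main-eq2}.
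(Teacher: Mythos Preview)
Your argument is essentially the paper's own, with more detail supplied. Both proceed by induction, both bound $u$ above by the attractor of a single-species equation with growth rate $a-[a-(1-\varepsilon)u^*_k]_+=\min\{a,(1-\varepsilon)u^*_k\}$ (your $g_0$), and both then need this attractor to sit \emph{strictly} below $u^*_{k+1}$ on $\overline\Omega$, after which the logistic--ODE argument of Theorem~\ref{T:1} closes the step. The paper parametrizes the buffer by taking $\varepsilon<\varepsilon_K$ directly, while you fix $\varepsilon=\varepsilon_k$ and add a $\delta$-shift followed by a limit $\delta\to0$; these are interchangeable. Where the paper simply asserts ``it is clear that $u^*_{K+1,\varepsilon}<u^*_{K+1}$'', you actually carry out the strong maximum principle/Hopf argument for $w=u^*_{k+1}-\tilde u_0$, which is a genuine addition.

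One computational slip: subtracting the two elliptic equations gives
\[
d\Delta w+\bigl(\min\{a,u^*_k\}-u^*_{k+1}-\tilde u_0\bigr)w=-\,\tilde u_0\bigl(\min\{a,u^*_k\}-g_0\bigr)\le 0,
\]
not $\ge 0$ as you wrote. This is harmless for the conclusion: with the correct sign, $w\ge0$ is a supersolution of an operator $d\Delta+c$ with bounded $c$, the right-hand side is $\not\equiv0$ (by your $\Omega^*\ne\emptyset$ argument), so $w\not\equiv0$, and the strong minimum principle together with Hopf and $\partial w/\partial n=0$ give $w>0$ on $\overline\Omega$. Just flip the sign in your display and invoke the minimum principle rather than the maximum principle.
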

\begin{proof}
 Let $k\ge0$ such that $u^*_k(x)>0$.  If $k=0$, the result follows from Theorem \ref{T:1}. So, we may suppose that $k\geq 1$. Let 
$$ 
K=\max\{j: 0\leq j\leq k\,\, \textrm{such that the lemma holds} \}.
$$
We will show that $K=k$. Suppose not. Hence $0\leq K\leq k-1$. 
Since $k>K$, by Theorem \ref{Tm-appriori-result}(i) we note that $\min_{x\in\overline{\Omega}}u^*_{K}\ge \min_{x\in\overline{\Omega}}u^*_{k}>0$. Now, by induction hypothesis,  for every $0\leq\varepsilon<\varepsilon_{K}$, there is $T_{\varepsilon}\gg 1$ such that 
$$ 
u_t\leq d \Delta u +u(a-\left[a(x)-(1-\varepsilon)u_K^*(x)\right]_{+}-u),\quad \forall\ t\geq T_{\varepsilon}.
$$
Let $u^*_{K+1,\varepsilon}(x)$ denotes the unique non-negative attracting solution of  
\begin{equation}\label{d:3-varepsilon}
\begin{cases}
u_t=d \Delta u +u(a(x)-\left[a(x)-(1-\varepsilon)u^*_K(x)\right]_+-u),\quad & x\in\Omega,\ t>0,\cr
\frac{\partial u}{\partial n}=0,\quad & x\in\partial\Omega,\ t>0.
\end{cases}
\end{equation}
Note that $u^*_{K+1,\varepsilon}(x)$ is a non-negative steady state  solution of \eqref{d:3-varepsilon}. 
Since $u(x,t;u_0,v_0)$ is a sub-solution of \eqref{d:3-varepsilon} for $t\ge T_{\varepsilon}$, we then conclude that 
\begin{align}\label{d:24}
&\limsup_{t\to\infty}\sup_{x\in\Omega}[u(x,t;u_0,v_0)-u^*_{K+1,\varepsilon}(x)] \leq 0.
\end{align}
 It is clear that $u^*_{K+1,\varepsilon}(x)<u^*_{K+1}(x)$ for every $x\in\overline{\Omega}$. Hence, by \eqref{d:24},  there exist $\tilde{\varepsilon}_{K+1}>0$ and $T_{K+1}\gg 1$ such that 
\begin{equation}\label{d:7}
u(x,t;u_0,v_0)\leq (1-\tilde{\varepsilon}_{K+1})u^*_{K+1}(x),\quad \forall t\ge T_{K+1}, \ x\in\Omega.
\end{equation}
Observe that inequality \eqref{d:7} is equivalent to inequality \eqref{d:9}. Therefore, by the arguments of the proof of Theorem \ref{T:1}, we can show that there is $0<\varepsilon_{K+1}\ll 1$ such that
$$
\liminf_{t\to\infty}v(x,t;u_0,v_0)\geq [a(x)-(1-\varepsilon)u^*_{K+1}(x)]_+
$$
uniformly in $x\in\Omega$ for all $0\leq \varepsilon<\varepsilon_{K+1}$.  
Hence we must have that $K\geq K+1$, which is absurd. Therefore $K=k$.
\end{proof}

Now, we present the proof of Theorem \ref{Main-tm-1}.

\begin{proof}[Proof of Theorem {\rm\ref{Main-tm-1}}] We suppose  $a(x)$ satisfies hypothesis  \eqref{H2}.
We first note that when $u^*\equiv0$, then the result easily follows from Lemma \ref{lemma3}. So, we may suppose that $u^*>0$.  Hence by Theorem \ref{Tm-appriori-result} (ii), we have that $\|u^*_k\|_{\infty}\to 0$ as $k\to\infty$. Thus, by Lemma \ref{Lem2}, we conclude that
$\liminf_{t\to\infty}v(x,t;u_0,v_0)\geq a_+(x)$ for every $x\in\Omega$. On the other hand, since $u(x,t;u_0,v_0)\geq 0$ for every $x\in\Omega$ and $t\geq 0$, it follows from the comparison principle for ODEs that $\limsup_{t\to\infty}v(x,t;u_0,v_0)\leq a_+(x)$ for every $x\in\Omega$. 
Hence, $\lim_{t\to\infty}v(x,t;u_0,v_0)=a_+(x)$
uniformly in $x$ and accordingly, 
$\lim_{t\to\infty}\|u(\cdot,t;u_0,v_0)\|_{\infty}=0$ uniformly in $x$. 
\end{proof}

\section{Dynamics of solutions of \eqref{Main-eq1} when $a(x)$ is strictly positive}

This section is devoted to the study of dynamics of system \eqref{Main-eq1} when $a(x)$ is strictly positive, i.e. $a_{\min}>0$. Thanks to Theorem \ref{Tm-appriori-result} and Lemma \ref{Lem2}, we have the following 
{\it a priori} estimate on the solutions of \eqref{Main-eq1}:

\begin{tm}\label{Tm-02-0} Suppose that $a_{\min}>0$. Then for every non-negative
and not identically zero 
initial condition $u_0(x),v_0(x)\in C(\overline{\Omega})$ satisfying
\begin{equation}\label{eq:initial-cond2}
   \{x\in\overline{\Omega}\ :\ v_0(x)=0\}\subset \{x\in\overline{\Omega} \ : \ a(x)=a_{\min}\},
\end{equation}
we have that 
\begin{equation}\label{asymptotic-eq-2}
\limsup_{t\to\infty}\sup_{x\in\Omega}u(x,t;u_0,v_0)\leq a_{\min},
\end{equation}
and 
\begin{equation}\label{asymptotic-eq-02}
\liminf_{t\to\infty}v(x,t;u_0,v_0)\geq [a(x)- a_{\min}]_{+}\quad \textrm{uniformly in }\ x\in\Omega.
\end{equation}
\end{tm}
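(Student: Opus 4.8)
The plan is to transport the analysis of the section devoted to Theorem~\ref{Main-tm-1} to the present, sink-free setting, and then let the iteration index of Theorem~\ref{Tm-appriori-result} tend to infinity. First I would record the elementary facts available when $a_{\min}>0$: since $\int_{\Omega}a\,dx>0$ we have $u^{*}(x)>0$, so the whole sequence $\{u^{*}_{k}\}_{k\ge0}$ of Theorem~\ref{Tm-appriori-result} is defined; the constant function $a_{\min}$ is a subsolution of \eqref{Main-eq2} and, inductively, of every equation \eqref{d:3} (because $a(x)\ge a_{\min}$ and $u^{*}_{k}(x)\ge a_{\min}$), whence $u^{*}_{k}(x)\ge a_{\min}>0$ on $\overline{\Omega}$ for all $k\ge0$; and, since each $u^{*}_{k}$ is non-constant ($a$ being non-constant), the strong maximum principle together with the Hopf boundary lemma upgrades this to the strict inequality $u^{*}_{k}(x)>a_{\min}$ on $\overline{\Omega}$.

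Next I would check that Lemma~\ref{lemma3}, Lemma~\ref{Lem1}, Theorem~\ref{T:1} and Lemma~\ref{Lem2} remain valid when their hypothesis \eqref{eq:initial-cond1} is replaced by \eqref{eq:initial-cond2} and \eqref{H2} is dropped. The point is that those proofs exploit the location of the zero set of $v_{0}$ only through the comparison step for the $v$-ODE: at a point $x$ with $v_{0}(x)=0$ one has $v(x,t;u_{0},v_{0})\equiv0$, but \eqref{eq:initial-cond2} forces $a(x)=a_{\min}<u^{*}_{k}(x)$, so the lower bound $[a(x)-u^{*}_{k}(x)]_{+}$ asserted there is zero and the conclusion holds trivially; conversely the regions on which a strictly positive lower bound for $v$ is extracted -- $\Omega^{*}=\{a>u^{*}\}$, the sets $\{a>u^{*}_{k}\}$, and the small balls in the proof of Lemma~\ref{Lem1} -- all lie inside $\{a>a_{\min}\}\subset\{v_{0}>0\}$, so the relevant comparison solutions start from strictly positive data. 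Granting this, Lemma~\ref{Lem2} gives, for every $k\ge0$,
\[
\liminf_{t\to\infty}v(x,t;u_{0},v_{0})\ \ge\ [a(x)-(1-\varepsilon_{k})u^{*}_{k}(x)]_{+}\ \ge\ [a(x)-u^{*}_{k}(x)]_{+}\qquad\text{uniformly in }x\in\Omega .
\]

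Both conclusions then follow by letting $k\to\infty$. For \eqref{asymptotic-eq-02}: by Theorem~\ref{Tm-appriori-result}(ii) one has $\|u^{*}_{k}-a_{\min}\|_{\infty}\to0$, and since $s\mapsto[s]_{+}$ is $1$-Lipschitz, $[a(x)-u^{*}_{k}(x)]_{+}\to[a(x)-a_{\min}]_{+}$ uniformly in $x$; letting $k\to\infty$ in the display yields \eqref{asymptotic-eq-02}. For \eqref{asymptotic-eq-2}: reusing the upper-bound part of the proof of Lemma~\ref{Lem2}, the display gives, for each fixed $k$ and each $\delta>0$, that $v(x,t;u_{0},v_{0})\ge[a(x)-u^{*}_{k}(x)]_{+}-\delta$ for $t$ large, so that $u(\cdot,t;u_{0},v_{0})$ is then a subsolution of $u_{t}=d\Delta u+u(\min\{a,u^{*}_{k}\}+\delta-u)$ with Neumann boundary conditions, whose attracting steady state converges to $u^{*}_{k+1}$ as $\delta\to0^{+}$; hence $\limsup_{t\to\infty}\sup_{x\in\Omega}u(x,t;u_{0},v_{0})\le\|u^{*}_{k+1}\|_{\infty}$, and a final application of Theorem~\ref{Tm-appriori-result}(ii) ($\|u^{*}_{k+1}\|_{\infty}\to a_{\min}$) gives \eqref{asymptotic-eq-2}.

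The hard part is the verification in the second paragraph: one must confirm that \eqref{H2} was only a standing hypothesis in that section and was never actually invoked in the proofs of the four results above, and that the weaker condition \eqref{eq:initial-cond2} still keeps $\{v_{0}=0\}$ disjoint from every set on which a positive lower bound for $v$ is generated -- which is exactly the place where the strict inequality $u^{*}_{k}>a_{\min}$ is used. Everything else is the routine uniform passage to the limit in $k$ powered by the uniform convergence $u^{*}_{k}\to a_{\min}$ of Theorem~\ref{Tm-appriori-result}(ii).
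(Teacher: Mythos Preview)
Your proposal is correct and coincides with the paper's approach: the paper offers no explicit proof of Theorem~\ref{Tm-02-0}, simply stating that it follows from Theorem~\ref{Tm-appriori-result} and Lemma~\ref{Lem2}, and your write-up is precisely the elaboration of that claim. Your second paragraph---checking that the Section~4 lemmas go through verbatim once \eqref{eq:initial-cond1} is replaced by \eqref{eq:initial-cond2} and \eqref{H2} is dropped, via the observation that $u^{*}_{k}>a_{\min}$ forces $\{a=a_{\min}\}$ (hence $\{v_0=0\}$) to lie outside every set on which a positive lower bound for $v$ is extracted---is exactly the verification the paper leaves to the reader.
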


Note that (\ref{eq:initial-cond2}) automatically holds
if $v_0(x)>0$ for all $x\in\bar\Omega$. In 
particular, Theorem \ref{Tm-02-0} applies when
$u_0$ is non-negative and $v_0>0$ in $\bar\Omega$.

Next, we find a sufficient condition on $(u_0,v_0)$ to ensure that the equality holds for \eqref{asymptotic-eq-2}. We start with the following lemma.

\begin{lem}\label{lemma:monotonicity of Lyapunov function} Suppose that $0<\min\{u_{0\min},v_{0\min}\}$.
Let $(u,v)(x,t;u_0,v_0)$ be the classical solution of \eqref{Main-eq1} and define 

\begin{equation}\label{eq:Lyapunov-function}
\mathcal{M}(t):=\int_{\Omega}\ln(\frac{v}{u})dx.
\end{equation}
Then 
\begin{equation}\label{eq:monotonicity-lyapunov-function}
\frac{d}{dt}\mathcal{M}(t)=-d\||\nabla \ln u(\cdot,t;u_0,v_0)|\|_{L^2(\Omega)}^2.
\end{equation}
Hence,
\begin{equation}\label{eq:monotonicity-lyapunov-function-1}
\mathcal{M}(t)=\mathcal{M}(0)-d\int_0^t\||\nabla \ln u(\cdot,s;u_0,v_0)|\|_{L^2(\Omega)}^2 ds,\quad \forall t\ge 0.
\end{equation}
\end{lem}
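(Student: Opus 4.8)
The plan is to compute $\frac{d}{dt}\mathcal{M}(t)$ directly by differentiating under the integral sign, using the two equations of \eqref{Main-eq1}. Since $u_{0\min}, v_{0\min}>0$, the comparison principle gives $u,v>0$ on $\overline\Omega$ for all $t>0$, so $\ln(v/u)$ is well-defined and smooth in $t$, and the differentiation is justified by the regularity asserted in Definition \ref{def1}. Writing $\mathcal{M}(t)=\int_\Omega(\ln v-\ln u)\,dx$, I would first compute $\partial_t\ln v = v_t/v = a(x)-u-v$ from the ODE satisfied by $v$, and $\partial_t\ln u = u_t/u = \frac{d\Delta u}{u}+(a(x)-u-v)$ from the PDE satisfied by $u$. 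Subtracting, the common term $a(x)-u-v$ cancels exactly, leaving
\begin{equation}
\frac{d}{dt}\mathcal{M}(t)=-\int_\Omega\frac{d\,\Delta u}{u}\,dx.
\end{equation}

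The key step is then to rewrite $\int_\Omega\frac{\Delta u}{u}\,dx$ via integration by parts. Using the identity $\Delta u = u\,\Delta(\ln u)+\frac{|\nabla u|^2}{u} = u\,\Delta(\ln u)+u|\nabla\ln u|^2$, or more directly integrating by parts:
\begin{equation}
\int_\Omega\frac{\Delta u}{u}\,dx=\int_{\partial\Omega}\frac{1}{u}\frac{\partial u}{\partial n}\,dS+\int_\Omega\frac{|\nabla u|^2}{u^2}\,dx=\int_\Omega|\nabla\ln u|^2\,dx,
\end{equation}
where the boundary term vanishes by the Neumann condition $\frac{\partial u}{\partial n}=0$ on $\partial\Omega$. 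Combining, $\frac{d}{dt}\mathcal{M}(t)=-d\int_\Omega|\nabla\ln u|^2\,dx=-d\||\nabla\ln u(\cdot,t;u_0,v_0)|\|_{L^2(\Omega)}^2$, which is \eqref{eq:monotonicity-lyapunov-function}. Then \eqref{eq:monotonicity-lyapunov-function-1} follows simply by integrating this identity in time from $0$ to $t$ and using the fundamental theorem of calculus.

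The main obstacle, and the only genuinely delicate point, is justifying the integration by parts and the differentiation under the integral sign near $t=0$: a priori $u(\cdot,t)$ is only in $W^{2,p}(\Omega)$ for $t>0$ (not necessarily up to $t=0$), and $\nabla\ln u=\nabla u/u$ requires a positive lower bound on $u$ on all of $\overline\Omega$. Both issues are resolved by the hypothesis $u_{0\min}>0$: the comparison principle with a spatially constant subsolution $\underline u(t)$ solving the logistic-type ODE $\underline u'=\underline u(a_{\min}-\|a\|_\infty-\underline u)$ (or a similar lower barrier) shows $u(x,t)\ge c(t)>0$ uniformly in $x$ for each fixed $t\ge 0$, and in fact $\inf_{x,\,t\in[0,T]}u(x,t)>0$ for every $T>0$. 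Together with the $W^{2,p}$-regularity for $t>0$ and continuity up to $t=0$ in $C(\overline\Omega)$, this makes $t\mapsto\mathcal{M}(t)$ continuous on $[0,\infty)$, continuously differentiable on $(0,\infty)$, and legitimizes all the manipulations above; the integrated identity \eqref{eq:monotonicity-lyapunov-function-1} on $[0,t]$ then follows by continuity at the endpoint $0$.
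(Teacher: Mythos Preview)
Your proof is correct and follows essentially the same route as the paper: compute $\partial_t\ln(v/u)=v_t/v-u_t/u=-d\,\Delta u/u$ using the two equations of \eqref{Main-eq1}, integrate over $\Omega$, and then integrate by parts (using the Neumann condition) to obtain $-d\int_\Omega|\nabla u|^2/u^2\,dx=-d\|\nabla\ln u\|_{L^2}^2$. Your discussion of positivity and regularity near $t=0$ is more careful than the paper's, which simply performs the formal computation without further justification.
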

\begin{proof}

Notice form \eqref{Main-eq1} that 
$$ 
\partial_t\ln(\frac{v}{u})=\frac{v_t}{v}-\frac{u_t}{u}=-d\frac{\Delta u}{u}
$$
Hence,  integrating with respect to the space variable yields
\begin{align*} 
\frac{d}{dt}\int_{\Omega}\ln(\frac{v}{u})dx=-d\int_{\Omega}\frac{\Delta u}{u}dx
&=d\int_{\Omega}<\nabla(\frac{1}{u}),\nabla u>dx 
=-d\int_{\Omega}\frac{|\nabla u|^2}{u^2}
dx.
\end{align*}
 Hence  the lemma holds.
\end{proof}

We introduce the following definition:
\begin{equation}\label{M-def}
\mathcal{M}_{\#}=\begin{cases}
\int_{\Omega}\ln(\frac{a(x)-a_{\min}}{a_{\min}})dx \quad  \text{if}\,\, \ln(\frac{a(x)-a_{\min}}{a_{\min}})dx\in L^1(\Omega;cr
-\infty & \text{otherwise}.
\end{cases}
\end{equation}

\begin{tm}\label{tm-2}
Suppose that  $0<\min\{u_{0\min},v_{0\min}\}$.  If 
\begin{equation}\label{initial-cond-1}
\mathcal{M}(0)=\int_{\Omega}\ln(\frac{v_0(x)}{u_0(x)})dx \leq \mathcal{M}_{\#},
\end{equation}
then
\begin{equation}\label{asymptotic-eq-3}
\limsup_{t\to\infty}\sup_{x\in\Omega}u(x,t;u_0,v_0)= a_{\min}.
\end{equation}
\end{tm}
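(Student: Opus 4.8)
The plan is to argue by contradiction, combining the upper bound $\limsup_{t\to\infty}\sup_{x\in\Omega}u(x,t;u_0,v_0)\le a_{\min}$ already supplied by Theorem~\ref{Tm-02-0} (applicable because $v_{0\min}>0$ forces \eqref{eq:initial-cond2}) with the monotonicity of the Lyapunov functional $\mathcal{M}$ from Lemma~\ref{lemma:monotonicity of Lyapunov function}. Thus it suffices to exclude the case $m:=\limsup_{t\to\infty}\sup_{x\in\Omega}u(x,t;u_0,v_0)<a_{\min}$, and one may assume $\mathcal{M}_{\#}>-\infty$, since $\mathcal{M}(0)$ is finite ($u_0,v_0$ being bounded above and below by positive constants) and so \eqref{initial-cond-1} is vacuous otherwise; in particular $\ln\frac{a(\cdot)-a_{\min}}{a_{\min}}\in L^1(\Omega)$ and $a(x)>a_{\min}$ for a.e.\ $x$. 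The first step, assuming $m<a_{\min}$, is to turn the gap $a_{\min}-m>0$ into a \emph{uniform} positive lower bound for $v$ at large times: for $\varepsilon\in(0,a_{\min}-m)$, once $u(\cdot,t;u_0,v_0)\le m+\varepsilon$ for $t\ge T_\varepsilon$, the $v$-equation gives $v_t\ge v\,(a(x)-m-\varepsilon-v)$, and comparison with the spatially-parametrized logistic ODE — whose equilibrium $a(x)-m-\varepsilon\ge a_{\min}-m-\varepsilon>0$ is bounded below uniformly in $x$ — yields $\liminf_{t\to\infty}v(x,t;u_0,v_0)\ge a(x)-m-\varepsilon$ uniformly in $x$, hence $v(x,t;u_0,v_0)\ge a_{\min}-m-2\varepsilon>0$ uniformly in $x$ for $t$ large. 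Together with the $L^\infty$ bounds of Proposition~\ref{Tm-01}, this makes $\int_\Omega\ln v(\cdot,t;u_0,v_0)\,dx$ bounded (above and below) and $\int_\Omega\ln u(\cdot,t;u_0,v_0)\,dx$ bounded above for all large $t$.

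The second step is to show that, along a suitable time sequence, $u$ converges to a positive constant. Since $\mathcal{M}$ is non-increasing (Lemma~\ref{lemma:monotonicity of Lyapunov function}) and, by Step~1, bounded below, it converges to a finite limit $L\le\mathcal{M}(0)$, and \eqref{eq:monotonicity-lyapunov-function-1} forces $\int_0^\infty\||\nabla\ln u(\cdot,s;u_0,v_0)|\|_{L^2(\Omega)}^2\,ds<\infty$; pick $t_n\to\infty$ with $\nabla\ln u(\cdot,t_n;u_0,v_0)\to0$ in $L^2(\Omega)$. By standard parabolic regularity and the a~priori bounds, $\{u(\cdot,t;u_0,v_0)\}_{t\ge1}$ is precompact in $C^1(\overline{\Omega})$, so after passing to a subsequence $u(\cdot,t_n;u_0,v_0)\to\bar u$ in $C^1(\overline{\Omega})$, with $0\le\bar u\le m$. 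From $\mathcal{M}(t_n)\le\mathcal{M}(0)$ and the lower bound on $\int_\Omega\ln v(\cdot,t_n)\,dx$ one gets that $\int_\Omega\ln u(\cdot,t_n)\,dx$ stays bounded below, which rules out $\bar u\equiv0$; and on the open set $\{\bar u>0\}$ the $C^1$ convergence gives $\nabla\bar u=0$, so (as $\bar u\in C^1(\overline{\Omega})$ can vanish only at interior minima and $\Omega$ is connected) $\bar u\equiv c$ for a constant $c$ with $0<c\le m<a_{\min}$.

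The last step extracts the contradiction from \eqref{initial-cond-1}. Passing to the limit along $t_n$, $\int_\Omega\ln u(\cdot,t_n)\,dx\to|\Omega|\ln c$, hence $\int_\Omega\ln v(\cdot,t_n)\,dx\to L+|\Omega|\ln c$; combined with $v(x,t_n)\ge a(x)-m-2\varepsilon$ and $\varepsilon\downarrow0$ (monotone convergence) this gives $L\ge\int_\Omega\ln\frac{a(x)-m}{c}\,dx$. On the other hand $L\le\mathcal{M}(0)\le\mathcal{M}_{\#}=\int_\Omega\ln\frac{a(x)-a_{\min}}{a_{\min}}\,dx$. But $0<c\le m<a_{\min}\le a(x)$ gives, for a.e.\ $x$, the pointwise inequality $\ln\frac{a(x)-m}{c}-\ln\frac{a(x)-a_{\min}}{a_{\min}}=\ln\frac{a_{\min}}{c}+\ln\frac{a(x)-m}{a(x)-a_{\min}}\ge\ln\frac{a_{\min}}{m}>0$, whence $\int_\Omega\ln\frac{a(x)-m}{c}\,dx\ge\int_\Omega\ln\frac{a(x)-a_{\min}}{a_{\min}}\,dx+|\Omega|\ln\frac{a_{\min}}{m}>\int_\Omega\ln\frac{a(x)-a_{\min}}{a_{\min}}\,dx$, contradicting the two displayed bounds on $L$. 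Therefore $m=a_{\min}$, i.e.\ \eqref{asymptotic-eq-3} holds.

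I expect the main obstacle to be the second step: converting $\int_0^\infty\||\nabla\ln u|\|_{L^2(\Omega)}^2\,ds<\infty$ into the statement that $u$ subsequentially approaches a \emph{positive} constant. This needs parabolic compactness and, to rule out the degenerate limit $\bar u\equiv0$, the lower bound on $\int_\Omega\ln u$, which itself relies on the uniform positivity of $v$ from Step~1 and hence on the strict inequality $m<a_{\min}$. Steps~1 and~3 are then comparison-principle bookkeeping and an elementary comparison of integrals.
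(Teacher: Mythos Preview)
Your proof is correct, but it takes a considerably longer route than the paper's. The paper's argument is essentially your Step~1 followed immediately by your Step~3, with Step~2 omitted entirely. Concretely: once you know that for some $\varepsilon>0$ and all large $t$ one has $u(\cdot,t)\le a_{\min}-\varepsilon$ and $v(\cdot,t)\ge a(x)-a_{\min}+\varepsilon$ uniformly in $x$, you can bound the Lyapunov functional directly,
\[
\mathcal{M}(t)=\int_\Omega\ln\frac{v(x,t)}{u(x,t)}\,dx\ \ge\ \int_\Omega\ln\frac{a(x)-a_{\min}+\varepsilon}{a_{\min}-\varepsilon}\,dx\ >\ \mathcal{M}_{\#},
\]
and combine this with the monotonicity $\mathcal{M}(t)\le\mathcal{M}(0)\le\mathcal{M}_{\#}$ to reach the contradiction in one line. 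No subsequence extraction, no parabolic compactness, no identification of a constant limit $c$ for $u$, and no need to rule out $\bar u\equiv 0$ are required; the only use of Lemma~\ref{lemma:monotonicity of Lyapunov function} is the inequality $\mathcal{M}(t)\le\mathcal{M}(0)$, not the finiteness of $\int_0^\infty\|\nabla\ln u\|_{L^2}^2$.

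What your approach buys is a somewhat sharper picture of the dynamics (you actually show that along some time sequence $u$ flattens out to a positive constant), and the machinery you set up would be natural if one wanted to push further toward convergence statements. But for the theorem as stated the paper's shortcut is both simpler and avoids the delicate part of your Step~2, namely the argument that the $C^1$ limit $\bar u$ is a strictly positive constant on all of $\Omega$.
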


\begin{proof} Suppose to the contrary that \eqref{asymptotic-eq-3} is false. Then by Theorem \ref{Tm-02-0}, there exist $\epsilon\in (0, a_{\min})$ and $T_\varepsilon\gg 1$ such that 
$$ 
u(x,t;u_0,v_0)<a_{\min}-\varepsilon\,\, \text{and} \,\, v(x,t;u_0,v_0)\geq a(x)-a_{\min}+\varepsilon\quad \forall\ x\in\Omega, \, t\ge T_{\varepsilon}.
$$
Hence
$$ 
\int_{\Omega}\ln(\frac{a(x)-a_{\min}+\varepsilon}{a_{\min}-\varepsilon})dx\leq \mathcal{M}(t),\quad \forall \ t\ge T_{\varepsilon},
$$
which combined with \eqref{eq:monotonicity-lyapunov-function-1} yield that,   for $t\geq T_{\varepsilon}$, 
\begin{equation}\label{d:12}
\int_{\Omega}\ln(\frac{a(x)-a_{\min}+\varepsilon}{a_{\min}-\varepsilon})dx\leq \mathcal{M}(t)=\mathcal{M}(0)-d\int_0^t\||\nabla \ln u(\cdot,s;u_0,v_0)|\|_{L^2(\Omega)}^2 ds.
\end{equation}
Observe that
$$ 
\frac{a(x)-a_{\min}+\varepsilon}{a_{\min}-\varepsilon}<\frac{a(x)-a_{\min}+\tilde{\varepsilon}}{a_{\min}-\tilde{\varepsilon}},\quad \forall x\in\Omega, \ \varepsilon<\tilde{\varepsilon}<a_{\min}.
$$
Hence, it follows from \eqref{d:12} that 
$\mathcal{M}_{\#}<\mathcal{M}(0),
$
contradicting \eqref{initial-cond-1}. Thus we must have that \eqref{asymptotic-eq-3} holds.
\end{proof}

\begin{rk}\label{rk2} We note the collection of functions $(U_c(x),V_c(x))=(c,a(x)-c)$ with $0< c< a_{\min}$,   forms a continuum of positive steady states of \eqref{Main-eq1}. In particular, for $(u_0(x),v_0(x))=(a_{\min},a(x)-a_{\min})$, we have that $\mathcal{M}(0)=\mathcal{M}_{\#}$ and $u(x, t;u_0,v_0)=a_{\min}$ for all $t\geq 0$ and $x\in\Omega$.
\end{rk}
 
 For $(x_i,y_i)\in\R^2$  we define the partial order 
 $$ 
(u_1,v_1)\precsim (u_2,v_2)\Leftrightarrow u_1\leq u_2\ 
\text{and}\ v_1\geq v_2.
 $$
 As a consequence of Remark \ref{rk2} and Theorem \ref{Tm-02-0}, we obtain the following result.
 
 \begin{tm}\label{tm-positive} Suppose that $(u_0,v_0)$ satisfy \eqref{eq:initial-cond2}. If in addition, $(u_0(x),v_0(x))$ satisfy 
 \begin{equation}\label{Initial-cond-1}
 (a_{\min},a(x)-a_{\min})\precsim (u_0(x),v_0(x)),\quad \forall\ x\in\Omega,
 \end{equation}
 then 
 \begin{equation}\label{asymp-lim-1}
 \lim_{t\to\infty}(u(x,t;u_0,v_0),v(x,t;u_0,v_0))=(a_{\min},a(x)-a_{\min}),\quad \forall x\in\Omega.
 \end{equation}
 \end{tm}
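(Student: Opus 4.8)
The plan is to trap $(u,v)$ between the stationary solution $(a_{\min},a(\cdot)-a_{\min})$ of \eqref{Main-eq1} (see Remark \ref{rk2} with $c=a_{\min}$) and the \emph{a priori} asymptotic bounds of Theorem \ref{Tm-02-0}. Since $(u_0,v_0)$ satisfies \eqref{eq:initial-cond2}, Theorem \ref{Tm-02-0} already yields
\[
\limsup_{t\to\infty}\sup_{x\in\Omega}u(x,t;u_0,v_0)\le a_{\min},\qquad \liminf_{t\to\infty}v(x,t;u_0,v_0)\ge [a(x)-a_{\min}]_+=a(x)-a_{\min}
\]
uniformly in $x$, where the last equality uses $a(x)\ge a_{\min}$. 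Hence I would reduce the theorem to proving the opposite bounds, i.e.\ that the ordering \eqref{Initial-cond-1} persists in time:
\begin{equation}\label{eq:tmpos-order}
a_{\min}\le u(x,t;u_0,v_0)\quad\text{and}\quad v(x,t;u_0,v_0)\le a(x)-a_{\min},\qquad\forall\,x\in\Omega,\ t\ge 0.
\end{equation}

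To prove \eqref{eq:tmpos-order} I would set $p:=u(\cdot,\cdot;u_0,v_0)-a_{\min}$ and $q:=(a(\cdot)-a_{\min})-v(\cdot,\cdot;u_0,v_0)$. Using $\Delta a_{\min}=0$ and the identity $a-u-v=-p+q$, one reads off from \eqref{Main-eq1} the weakly coupled linear system
\begin{equation}\label{eq:tmpos-coop}
p_t=d\Delta p-up+uq,\qquad q_t=vp-vq,\qquad \frac{\partial p}{\partial n}\Big|_{\partial\Omega}=0,
\end{equation}
which has bounded coefficients and \emph{non-negative} off-diagonal coefficients $u\ge 0$ and $v\ge 0$, so it is cooperative; moreover \eqref{Initial-cond-1} says precisely that $p(\cdot,0)\ge 0$ and $q(\cdot,0)\ge 0$. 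The comparison principle for cooperative parabolic-ODE systems then gives $p\ge 0$ and $q\ge 0$ for all $t\ge 0$, which is \eqref{eq:tmpos-order}. (For each fixed $x$ the $q$-equation is a scalar linear ODE in $t$, so once $p\ge 0$ is known the sign of $q$ is immediate; propagating $p\ge 0$ is then the scalar parabolic maximum principle, handled for instance on the perturbed pair $p+\delta e^{Kt}$, $q+\delta e^{Kt}$ with $K$ large followed by $\delta\to 0$. Equivalently, one may invoke the monotonicity of the solution semiflow of the competitive system \eqref{Main-eq1} with respect to $\precsim$, applied to the constant orbit $(a_{\min},a(\cdot)-a_{\min})$.)

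Combining \eqref{eq:tmpos-order} with the two limits of Theorem \ref{Tm-02-0} displayed above then forces $u(x,t;u_0,v_0)\to a_{\min}$ uniformly in $x$ and $v(x,t;u_0,v_0)\to a(x)-a_{\min}$ (also uniformly in $x$), which is \eqref{asymp-lim-1}. I expect the main obstacle to be the propagation step \eqref{eq:tmpos-order}: one must check that cooperativity of \eqref{eq:tmpos-coop} is not spoiled by the possible vanishing of $v$ and of $a-a_{\min}$ on the set $\{x:a(x)=a_{\min}\}$ — there \eqref{Initial-cond-1} forces $v_0\equiv 0$, hence $v(\cdot,t;u_0,v_0)\equiv 0=a(\cdot)-a_{\min}$ for all $t$, so no sign information is lost — and that the mere continuity of $v$ in $x$ suffices, which it does because the $q$-equation is pointwise in $x$. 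Everything else is routine bookkeeping.
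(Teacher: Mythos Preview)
Your proposal is correct and takes essentially the same approach as the paper: the paper's proof is the two-line argument ``by the comparison principle for competitive systems the ordering $(a_{\min},a(\cdot)-a_{\min})\precsim(u,v)$ persists for all $t\ge 0$; combine with Theorem~\ref{Tm-02-0}.'' Your derivation of the cooperative system \eqref{eq:tmpos-coop} for $(p,q)$ simply unpacks that comparison principle explicitly, and you yourself note the equivalence at the end of that paragraph.
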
 
 
 \begin{proof} By the comparison principle for competitive systems, it holds that 
\begin{equation*}
(a_{\min},a(x)-a_{\min})\precsim (u(x,t;u_0,v_0),v(x,t;u_0,v_0)),\quad \forall\ t\geq 0, x\in\Omega.
\end{equation*}
Thus, the result follows from Theorem \ref{Tm-02-0}.
 \end{proof}

We conclude with some comments on Theorems \ref{tm-2}
and \ref{tm-positive}. 
We first note that the hypotheses of Theorem \ref{tm-positive} implies that $\{x\in\overline{\Omega} \ :\ v_0(x)=0\}=\{x\in\overline{\Omega}\ :\ a(x)=a_{\min}\}$. In Theorem \ref{tm-2}, the hypothesis that $0<\min\{u_{0\min},v_{0\min}\}$ was essential to justify that $\mathcal{M}(t)$ is well defined by noticing that $0<\min\{u_{\min}(t),v_{\min}(t)\}$ for every $t\ge0$. Note that the right hand side of equation \eqref{eq:monotonicity-lyapunov-function-1} clearly suggests that $\mathcal{M}(t)$ might be well defined under a more general weaker assumption.   Now, if $\mathcal{M}(0)$ is well defined, then \eqref{Initial-cond-1} implies that $\mathcal{M}(0)\leq \mathcal{M}_{\#}$. Thus, we clearly see that these two theorems only complement each other and one does not imply the other. Moreover, 
they both provide sufficient conditions for the first species $u(x,t)$ to reach its possible maximum state at infinity when \eqref{eq:initial-cond2} holds.

\end{document}